\setheadfoot{\onelineskip}{2\onelineskip} 
\crefname{notation}{Notation}{Notations}
\Crefname{notation}{Notation}{Notations}
\crefname{remarkx}{Remark}{Remarks}
  \setlist{nosep}
  \tikzset{
biml/.tip={Glyph[glyph math command=triangleleft, glyph length=.95ex]},
bimr/.tip={Glyph[glyph math command=triangleright, glyph length=.95ex]},
}
\tikzset{
	tick/.style={postaction={
  	decorate,
    decoration={markings, mark=at position 0.5 with
    	{\draw[-] (0,.4ex) -- (0,-.4ex);}}}
  }
} 
\tikzset{
	slash/.style={postaction={
  	decorate,
    decoration={markings, mark=at position 0.5 with
    	{\draw[-] (.3ex,.3ex) -- (-.3ex,-.3ex);}}}
  }
}
\newcommand{\adj}[5][30pt]{
\begin{tikzcd}[ampersand replacement=\&, column sep=#1]
  #2\ar[r, shift left=6pt, "#3"]
  \ar[r, phantom, "\scriptstyle\Rightarrow"]\&
  #5\ar[l, shift left=6pt, "#4"]
\end{tikzcd}
}
\newcommand{\lkan}[6][Rightarrow]{
	\begin{tikzcd}[ampersand replacement=\&, row sep=23pt]
		#2\ar[r, "#3", ""' name=p]\ar[d, "#5"']\&#4\\
		#6\ar[ur, bend right=20pt, pos=.7, "\lens{#3}{#5}"', "" name=q]
		\ar[from=p, to=p|-q, xshift=-2pt, shorten >=-7pt, #1]
	\end{tikzcd}
}
\newcommand{\xtickar}[1]{\begin{tikzcd}[baseline=-0.5ex,cramped,sep=small,ampersand 
replacement=\&]{}\ar[r,tick, "{#1}"]\&{}\end{tikzcd}}
\theoremstyle{definition}
\newtheorem{definitionx}{Definition}[chapter]
\newtheorem{notation}[definitionx]{Notation and Terminology}
\theoremstyle{plain}
\newtheorem{theorem}[definitionx]{Theorem}
\newtheorem{proposition}[definitionx]{Proposition}
\newtheorem{corollary}[definitionx]{Corollary}
\newtheorem{lemma}[definitionx]{Lemma}
\newtheorem*{theorem*}{Theorem}
\newtheorem*{proposition*}{Proposition}
\newtheorem*{corollary*}{Corollary}
\newtheorem*{lemma*}{Lemma}
\newtheorem*{warning*}{Warning}
\newenvironment{example}
  {\pushQED{\qed}\examplex}
  {\popQED\endexamplex}
 \newenvironment{remark}
  {\pushQED{\qed}\remarkx}
  {\popQED\endremarkx}
  \newenvironment{definition}
  {\pushQED{\qed}\definitionx}
  {\popQED\enddefinitionx}
\DeclareSymbolFont{stmry}{U}{stmry}{m}{n}
\DeclareMathSymbol\fatsemi\mathop{stmry}{"23}
\DeclareFontFamily{U}{mathx}{\hyphenchar\font45}
\DeclareFontShape{U}{mathx}{m}{n}{
      <5> <6> <7> <8> <9> <10>
      <10.95> <12> <14.4> <17.28> <20.74> <24.88>
      mathx10
      }{}
\DeclareSymbolFont{mathx}{U}{mathx}{m}{n}
\DeclareMathAccent{\widecheck}{0}{mathx}{"71}
\NewDocumentEnvironment{sequation}{O{\fontsize{15pt}{15pt}\selectfont
}b}
 {
  \yufip_sequation:nnn {equation}{#1}{#2}
 }{}
\NewDocumentEnvironment{sequation*}{O{\fontsize{16pt}{16pt}\selectfont
}b}
 {
  \yufip_sequation:nnn {equation*}{#1}{#2}
 }{}
\renewcommand{\ss}{\subseteq}
\DeclareMathOperator{\Hom}{Hom}
\DeclareMathOperator{\Mor}{Mor}
\DeclareMathOperator{\dom}{dom}
\DeclareMathOperator{\cod}{cod}
\DeclareMathOperator{\ob}{Ob}
\DeclareMathOperator{\el}{El}
\newcommand{\cat}[1]{\mathcal{#1}}
\newcommand{\Cat}[1]{\mathbf{#1}}
\newcommand{\Fun}[1]{\textit{#1}}
\newcommand{\hh}[2][]{#1 \tn{#2} #1}
\newcommand{\qqand}{\hh[\qquad]{and}}
\newcommand{\qand}{\hh[\quad]{and}}
\newcommand{\hi}[4][]{#1 #2 \tn{#4} #3}
\newcommand{\where}[1][,]{\hi[#1]{\qquad}{\quad}{where}}
\newcommand{\id}{\mathrm{id}}
\newcommand{\then}{\mathbin{\fatsemi}}
\newcommand{\too}{\longrightarrow}
\newcommand{\mapstoo}{\longmapsto}
\newcommand{\To}[2][]{\xrightarrow[#1]{\tn{$#2$}}}
\newcommand{\from}{\leftarrow}
\newcommand{\fromm}{\longleftarrow}
\newcommand{\From}[1]{\xleftarrow{#1}}
\newcommand{\imp}{\Rightarrow}
\newcommand{\limp}{\Leftarrow}
\newcommand{\Imp}[2][]{\xRightarrow[#1]{\tn{$#2$}}}
\newcommand{\tickar}{\xtickar{}}
\newcommand{\card}{\,^{\#}}
\newcommand{\op}{^\tn{op}}
\newcommand{\co}{^\tn{co}}
\newcommand{\tn}[1]{\textnormal{#1}}
\newcommand{\ul}[1]{\underline{#1}}
\newcommand{\nn}{\mathbb{N}}
\newcommand{\bo}{\textit{bo}}
\newcommand{\ff}{\textit{ff}}
\newcommand{\dopf}{\textit{dopf}}
\newcommand{\smset}{\Cat{Set}}
\newcommand{\smcat}{\Cat{Cat}}
\newcommand{\ssmcat}{\mathbb{C}\Cat{at}}
\newcommand{\ssmcata}{\mathbb{C}\Cat{at}_{\mathbf{A}}}
\newcommand{\pprof}{\mathbb{P}\Cat{rof}}
\newcommand{\pprofcat}[1][]{\mathbb{P}\Cat{rof}_{#1\,}\tn{-}\smcat}
\newcommand{\lladj}{\mathbb{L}\Cat{Adj}}
\newcommand{\End}{\Cat{End}}
\newcommand{\mnd}{\Cat{Mnd}}
\newcommand{\comnd}{\Cat{Cmd}}
\newcommand{\List}{\Fun{list}}
\newcommand{\set}{\tn{-}\Cat{Set}}
\newcommand{\paths}{\textit{paths}}
\newcommand{\frcat}{\textit{frcat}}
\newcommand{\simp}{\mathbf{\Delta}}
\newcommand{\yon}{{\mathcal{y}}}
\newcommand{\poly}{\Cat{Poly}}
\newcommand{\cofree}{\mathfrak{c}}
\newcommand{\free}{\mathfrak{m}}
\newcommand{\uu}{u}
\newcommand{\biglens}[3][\vphantom{f_f^f}]{
     \begin{bmatrix}{#1#3} \\ {#1#2} \end{bmatrix}
}
\newcommand{\littlelens}[3][\vphantom{f}]{
     \begin{bsmallmatrix}{#1#3} \\ {#1#2} \end{bsmallmatrix}
}
\newcommand{\lens}[2]{
  \relax\if@display
     \biglens{#1}{#2}
  \else
     \littlelens{#1}{#2}
  \fi
}
\newcommand{\lc}[2]{
  \relax\if@display
     \biglens[]{\!\!#1\!\!}{\!\!#2\!\!}
  \else
     \littlelens[]{\!\!#1\!\!}{\!\!#2\!\!}
  \fi
}
\newcommand{\catcmd}[1]{k_{\cat{#1}}}
\newcommand{\elts}[1][]{\textit{elts}_{#1}}
\newcommand{\bul}[1]{\overset{#1}{\bullet}}
\newcommand{\removethis}[1]
\newcommand{\ifpagenotequal}[4]{%
    \edef\pageX{\pageref{#1}}%
    \edef\pageY{\pageref{#2}}%
    \ifx\pageX\pageY
        #3%
    \else
        #4%
    \fi
}
\newcommand{\keywords}[1]{%
  \par\noindent\textbf{Keywords: }#1\par
}
\newcommand{\thanksAFOSR}[1]{This material is based upon work supported by the Air Force Office of Scientific Research under award number #1.}
\begin{document}

\title{Categories by Kan extension}

\author{David I. Spivak}

\date{\vspace{-.1in}}

\begingroup
\linespread{1}\selectfont
\maketitle

\begin{abstract}
Categories can be identified---up to isomorphism---with polynomial comonads on $\smset$. The left Kan extension of a functor along itself is always a comonad---called the density comonad---so it defines a category when its carrier is polynomial. We provide a number of generalizations of this to produce new categories from old, as well as from distributive laws of monads over comonads. For example, all Lawvere theories, all product completions of small categories, and the simplicial indexing category $\simp\op$ arise in this way. Another, seemingly much less well-known, example constructs a so-called \emph{selection} category from a polynomial comonad in a way that's somehow dual to the construction of a \emph{Lawvere theory} category from a monad; we'll discuss this in more detail. Along the way, we will see various constructions of non-polynomial comonads as well.\\
\keywords{Kan extension, parametric right adjoint (pra-)functor, comonad, density comonad, polynomial comonad, selection category.}
\end{abstract}
\endgroup


\chapter{Introduction}\label{chap.intro}

In this paper we provide new methods for constructing categories from other data, using left Kan extensions \cite{kan1958adjoint,macLane1998categories}. Of course, left Kan extensions (of functors along functors) are \emph{functors}; in what sense could they also be \emph{categories}? A recent result of Ahman and Uustalu \cite{ahman2016directed} allows us to make the connection by showing that categories $\cat{C}$ (up to isomorphism) can be identified with comonads on $\smset$ for which the underlying functor $\catcmd{C}\colon\smset\to\smset$ is polynomial (preserves all connected limits).%
\footnote{
In a \href{https://mathoverflow.net/questions/457580/examples-of-non-polynomial-comonads-on-set}{mathoverflow post} Simon Henry and Kevin Carlson explained that comonads whose underlying functor $\smset\to\smset$ preserves \emph{finite} connected limits can be identified with \emph{ionads}: Grothendieck toposes equipped with a separating set of points \cite{garner2011ionads}. Aaron David Fairbanks gave examples of still more general comonads on $\smset$. \cref{chap.Kan,chap.density_comonad} give constructions for such comonads.
}
The purpose of this paper is to give a variety of ways to produce comonads using Kan extensions.

We denote a left Kan extension of $p$ along $q$ by $\lens{p}{q}\coloneqq\mathrm{Lan}_q(p)$.%
\footnote{This paper uses a large line-spread in order to typographically accommodate the $\lens{-}{-}$ notation.} 
It is well-known that the left Kan extension $\lens{p}{p}$ of a functor $p\colon\cat{C}\to\cat{D}$ along itself, when it exists, always carries a comonad structure. We will generalize this by showing that if $\cat{C}$ and $\cat{D}$ are accessible categories, e.g.\ copresheaf categories, then for any accessible monad $t$, accessible comonad $k$, and distributive law $t\circ k\to k\circ t$, the left Kan extension $\lens{p\circ k}{p\circ t}$ exists and has a comonad structure. We will see in \cref{thm.monad_comonad_dist} that---with no further assumptions on $t$---when $p$ and $k$ are polynomial functors, this left Kan extension comonad, so it can be identified with a category.

We offer several examples. It is not too surprising that every category $\cat{C}$ arises in this way---as we will see in \cref{ex.recover_site}---essentially because the functor $U_\cat{C}\colon\cat{C}\set\to\smset$ given by $U_\cat{C}(X)\coloneq\sum_{c:\ob\cat{C}}X(c))$ is comonadic
\[
\begin{tikzcd}
	\cat{C}\set\ar[r, "U_\cat{C}", ""' name=top]\ar[d, "U_\cat{C}"']&\smset\\
	\smset\ar[ur, bend right=25pt, "\catcmd{C}"', "" name=bot]
	\ar[from=top, to=bot-|top, shorten=1pt, Rightarrow]
\end{tikzcd}
\]
In other words, $\cat{C}$ corresponds to the ``density'' comonad associated to $U_\cat{C}$. But it is more interesting to ``get more out than you put in,'' i.e.\ to find interesting categories arising from much less data. For example, we will see that 
\begin{itemize}
	\item the simplicial indexing category, $\simp\op$ arises as the left Kan extension $\lens{\paths}{\paths\;\circ\;\frcat}$ of one functor from the category of graphs to $\smset$ along another (\cref{prop.deltaop}), 
	\item the walking arrow \fbox{$\bullet\to\bullet$} arises as the left Kan extension $\lens{\yon+1}{\yon+1}$ of the maybe monad along itself (\cref{ex.walking}), and
	\item the free product completion of a category $\cat{C}$ arises as the left Kan extension $\lens{\List\circ\catcmd{C}}{\List}$ (\cref{prop.prod_completion}). In particular $\lens{\List}{\List}$ corresponds to the category $\Cat{FinSet}\op$.
\end{itemize}

It is well-known that $S$-sorted finitary Lawvere theories $\cat{T}$ \cite{lawvere1963functorial} have associated finitary monads $t$ on $S\set$. Namely, the finite product category associated to $t$ is the full subcategory of the opposite of the Kleisli category $S\set_t$ spanned by the finite $S$-sets. The previous sentence is a mouthful, but we can recover this category more concisely again using a generalization of the density comonad: $\cat{T}\cong\lens{\List}{\List\,\circ\, t}$; see \cref{ex.lawvere}. 

Categories of the form $\lens{p\circ\catcmd{C}}{p}$ seem to be new, other than being discussed in a \href{https://topos.institute/blog/2021-12-30-selection-categories/}{blog post} by the author. There they were called \emph{selection categories}, referring to their proposed use in modeling natural selection. They arise from \cref{thm.colax}, which says that in a fairly general setting, the functor $\lens{p\circ-}{p}$ is colax monoidal:
\[
  \lens{p}{p}\imp\id_\cat{C}.
  \qqand
  \lens{p\circ q\circ r}{p}\imp\lens{p\circ q}{p}\circ\lens{p\circ r}{p}.
\]
In \cref{thm.sel_enriched,prop.selection_boff}, we prove that the selection category construction $\lens{p\circ-}{p}$ comprises a functor $\smcat\to\pprofcat$, from the category of small categories to that of $\pprof$-enriched categories, and that this functor respects the bijective-on-objects, fully-faithful factorization system.

\begin{notation}\label{not.main}

We may denote composition $A\To{f}B\To{g}C$ by $f\then g$ or by $g\circ f$. 
We denote the identity map (e.g.\ on an object $A$) 
by $A\To{\id_A}A$. 
We use $1$ to denote a terminal object in any category that has one. 

Given a set $I$ and a $I$-shaped diagram $A\colon I\to\cat{S}$, we denote the coproduct---when it exists---by $\sum_{i:I}A_i$. As here, we sometimes write $A_i$ in place of $A(i)$.

For any small category $\cat{C}$ and object $c:\ob\cat{C}$, let
\[\cat{C}[c]\coloneqq\sum_{c':\ob\cat{C}}\cat{C}(c,c')\]
denote the set of all $\cat{C}$-morphisms emanating from $c$. If $I\colon\cat{C}\to\smset$ is any functor, we denote its category of elements by $\el_\cat{C}I$. We denote by $\cat{C}\set$ the category of $\cat{C}$-copresheaves, i.e.\ of functors $\cat{C}\to\smset$ and natural transformations between them.


We denote the 1-category of small categories and functors by $\smcat$; we denote by $\ssmcat$ the 2-category that also includes natural transformations. We denote by $\lladj\ss\ssmcat$ its wide and locally-full subcategory consisting of small categories, \emph{left adjoint} functors, and natural transformations between left adjoints. 

We denote by $\ssmcata\ss\ssmcat$ the locally-full subcategory consisting of accessible categories, accessible functors, and all natural transformations. Whenever we speak of an accessible functor, we will implicitly assume it is between accessible categories. Given an accessible category $\cat{C}$, we write $\End(\cat{C})$ for the category of accessible endofunctors $\cat{C}\to\cat{C}$; it comes equipped with a monoidal structure $(\id_\cat{C},\circ)$. We denote the categories of accessible monads and comonads on $\cat{C}$ by $\mnd(\cat{C})$ and $\comnd(\cat{C})$ respectively.

Given a set $A$, we denote the endofunctor it represents by
\[\yon^A\coloneqq\smset(A,-)\colon\smset\to\smset.\]
We may write $1$ for $\yon^0$ and $\yon$ for $\yon^1$; note that $1$ is terminal and that $\yon$ is the identity functor and the unit for the $(\circ$)-monoidal product on $\End(\smset)$.

When a map is unique up to unique isomorphism, we may refer to it using the definite article ``the'', rather than ``a''.

\end{notation}
\section*{Acknowledgments}
\thanksAFOSR{FA9550-23-1-0376}

\chapter{Left Kan extensions}\label{chap.Kan}

This section is probably mostly review for experts, though we have not found all the material written elsewhere; see \cite{kelly1982basic}, \cite[Appendix A]{diliberti2020codensity}.

Let $q\colon \cat{C}\to\cat{C'}$ be an accessible functor between accessible categories,%
\footnote{For readers who are not very familiar with accessible categories, the term is basically a smallness condition. Every small category is Morita equivalent to an accessible one (namely its idempotent completion) \cite{makkai1989accessible}, and every Grothendieck topos (e.g.\ $\smset$) is accessible. Accessibility is the ``ability to access every object by small ones,'' in a certain sense.}
and let $\cat{D}$ be a cocomplete accessible category. In that case, the \emph{left Kan extension along $q$}, which we will denote $\lens{-}{q}\coloneqq\mathrm{Lan}_q(-)$, exists \cite{adamek1994locally} and is the left adjoint to $q$-precomposition:%
\footnote{The 2-cell in \eqref{eqn.global_Kan} points in both the direction of the unit and the counit of the adjunction $\lens{-}{q}\dashv (-\circ q)$.} 
	\begin{equation}\label{eqn.global_Kan}
	\adj{\ssmcat(\cat{C},\cat{D})}{\lens{-}{q}}{-\circ q}{\ssmcat(\cat{C'},\cat{D})}.
	\end{equation}
In other words, for any $p\colon\cat{C}\to\cat{D}$, there is a natural bijection
\begin{equation}\label{eqn.coclosure}
	\Hom\left(\lens{p}{q},r\right)\cong\Hom(p,r\circ q).
\end{equation}
The left Kan extension of $p$ along $q$ is unique up to unique isomorphism and is displayed diagonally here:
	\[
	\lkan{\cat{C}}{p}{\cat{D}}{q}{\cat{C'}}
	\]
As indicated, it comes equipped with a natural transformation, which we denote by
\begin{equation}\label{eqn.coeval}
	\eta_p^q\colon p\imp \lens{p}{q}\circ q,
\end{equation}
and which is given by the unit of the defining adjunction \eqref{eqn.global_Kan}. Given any functor $p'\colon\cat{C'}\to\cat{D}$, the counit of \eqref{eqn.global_Kan} also provides a natural transformation
\begin{equation}\label{eqn.counit}
	\epsilon_{p'}^q\colon\lens{p'\circ q}{q}\imp p'.
\end{equation}
The adjunction's triangle equations say that the following commute
\begin{equation}\label{eqn.triangle}
\begin{tikzcd}
	r\circ q\ar[r, "\eta_{r\circ q}^q"]\ar[rr, bend right, equals]&
	\lens{r\circ q}{q}\circ q\ar[r, "\epsilon_r^q\circ q"]&
	r\circ q
\end{tikzcd}
\qand
\begin{tikzcd}[column sep=30pt]
	\lens{p}{q}\ar[r, "\lens{\eta_{p}^q}{q}"]\ar[rr, bend right, equals]&
	\lens{\lens{p}{q}\circ q}{q}\ar[r, "\epsilon_{\littlelens{p}{q}}^q"]&
	\lens{p}{q}
\end{tikzcd}
\end{equation}

\begin{remark}
In full generality, the right Kan extension of $p\colon\cat{C}\to\cat{D}$ along $q\colon\cat{C}\to\cat{C'}$ is given by the left Kan extension $\lens{p\op}{q\op}$ of $p\op$ along $q\op$. Right Kan extensions will play very little role in this paper.
\end{remark}

\begin{proposition}\label{prop.functoriality}
The symbol $\lens{p}{q}$ is covariantly functorial in $p\colon\cat{C}\to\cat{D}$ and contravariantly functorial in $q\colon\cat{C}\to\cat{C'}$. That is, given $\varphi\colon p\imp p'$ and $\psi\colon q'\imp q$ we have
\[
	\lens{\varphi}{\psi}\colon\lens{p}{q}\imp\lens{p'}{q'}.
\]
\end{proposition}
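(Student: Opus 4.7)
The plan is to define $\lens{\varphi}{\psi}$ via the universal property of the left Kan extension, i.e.\ the natural bijection \eqref{eqn.coclosure}. Setting $r \coloneqq \lens{p'}{q'}$ in \eqref{eqn.coclosure} gives
\[
\Hom\!\left(\lens{p}{q},\, \lens{p'}{q'}\right) \cong \Hom\!\left(p,\, \lens{p'}{q'} \circ q\right),
\]
so it suffices to produce a natural transformation $p \imp \lens{p'}{q'} \circ q$ out of $\varphi$, $\psi$, and the standard structure maps. First I would form the composite
\[
p \Imp{\varphi} p' \Imp{\eta_{p'}^{q'}} \lens{p'}{q'} \circ q' \Imp{\lens{p'}{q'} \circ \psi} \lens{p'}{q'} \circ q,
\]
in which the last arrow is the whiskering of $\psi\colon q'\imp q$ by $\lens{p'}{q'}$. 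Then $\lens{\varphi}{\psi}$ is by definition the map corresponding to this composite under the displayed bijection.

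To verify functoriality, I would combine naturality of the bijection \eqref{eqn.coclosure} in both slots with the triangle equations \eqref{eqn.triangle}. In the identity case $\varphi = \id_p$ and $\psi = \id_q$, the composite above collapses to the unit $\eta_p^q$, and the standard adjunction identity that $\eta_p^q$ transposes to $\id_{\lens{p}{q}}$ gives $\lens{\id_p}{\id_q} = \id_{\lens{p}{q}}$. For composition, given $\varphi'\colon p'\imp p''$ and $\psi''\colon q''\imp q'$ in addition, I would expand the defining composite for $\lens{\varphi'\circ\varphi}{\psi\circ\psi''}$, use naturality of $\eta_{(-)}^{q''}$ to slide $\varphi$ past $\eta_{p''}^{q''}$, and then recognize the result as the image under \eqref{eqn.coclosure} of $\lens{\varphi'}{\psi''} \circ \lens{\varphi}{\psi}$.

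I do not anticipate a serious obstacle; the only subtle point is to get the variance in $q$ right. The contravariance is a manifestation of the mate correspondence: any natural transformation $-\circ q' \imp -\circ q$ between the right adjoints in \eqref{eqn.global_Kan} transposes to one between the corresponding left adjoints in the opposite direction, $\lens{-}{q} \imp \lens{-}{q'}$, and this is exactly what the whiskering by $\psi$ accomplishes inside the defining composite. Apart from this bookkeeping, the argument is a routine diagram chase in the ambient adjunction.
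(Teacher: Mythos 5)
Your proposal is correct and takes essentially the same route as the paper: the paper also obtains the contravariant part by transposing $p\Rightarrow\lens{p}{q'}\circ q'\Rightarrow\lens{p}{q'}\circ q$ under \eqref{eqn.coclosure}, getting the covariant part for free from the adjunction \eqref{eqn.global_Kan}. Your only (harmless) variation is packaging both variances into a single transposed composite rather than treating them separately.
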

\begin{proof}
For the covariant functoriality of $\lens{-}{q}\colon\ssmcat(\cat{C},\cat{D})\to\ssmcat(\cat{C'},\cat{D})$, see \eqref{eqn.global_Kan}. For the contravariant functoriality of $\lens{p}{-}\colon\ssmcat(\cat{C},\cat{C'})\op\to\ssmcat(\cat{C'},\cat{D})$, suppose given a map $\psi\colon q'\imp q$ in the category $\ssmcat(\cat{C},\cat{C}')$, and consider the composite
\[
p\Imp{\eta_p^{q'}}\lens{p}{q'}\circ q'\Imp{\lens{p}{q'}\circ\psi}\lens{p}{q'}\circ q
\]
By \eqref{eqn.coclosure}, this induces a map $\lens{p}{q}\imp\lens{p}{q'}$. The preservation of identities and composites is easy to check.
\end{proof}

\begin{example}
	Let $P,Q:\smset$ be sets. The left Kan extension of $P\colon 1\to\smset$ along $Q\colon 1\to\smset$ is the functor $\smset\to\smset$ shown here:
	\[
  	\lkan{1}{P}{\smset}{Q}{\smset}
	\where
	\lens{P}{Q}\colon X\mapsto P\times X^Q
	\]
	Indeed, there is a map $P\to P\times Q^Q$ given by $\mathrm{p}\mapsto(\mathrm{p},\id_Q)$, and for any other $F\colon\smset\to\smset$ equipped with $f\colon P\to F(Q)$, there is a natural transformation
	\[
		\lens{P}{Q}\To{\lens{f}{Q}}\lens{F\circ Q}{Q}\To{\epsilon^Q_F}F.
	\]
	Explicitly, in this case its component at $X:\smset$ is
	\[
  	P\times X^Q\To{f\times X^Q}
  	F(Q)\times X^Q\To{\epsilon_F^Q(X)}F(X)
	\]
	where the counit $\epsilon_F^Q(X)$ is the composite $F(Q)\times X^Q\to F(Q)\times F(X)^{F(Q)}\to F(X)$: apply $F$ to the hom-set and then evaluate. We leave the remaining details to the reader.
\end{example}

\begin{remark}\label{rem.closed_analogy}
Consider the case that $\cat{C}=\cat{C'}=\cat{D}$. The category $\End(\cat{C})$ of accessible endofunctors on $\cat{C}$ has a monoidal structure $(\id_\cat{C},\circ)$ given by composition of functors, and \eqref{eqn.coclosure} exhibits $\lens{p}{q}$ as a \emph{coclosure} for this structure. 

Thinking of Kan extension as analogous to a (non-symmetric) monoidal closure, e.g.\
\[\Hom(Q\otimes P',P)\cong\Hom(P',[Q,P]),\]
can provide intuition for thinking about Kan extensions, whose notation $\lens{p}{q}$ is analogous to $[Q,P]$.%
\footnote{The notation $\lens{p}{q}$ also comes from notation for lenses; see \cite{hedges2018limits,myers2020double,spivak2019generalized}.} Beyond having the same sort of functoriality and unit and counits
\begin{align*}
  &p\to\lens{p}{q}\circ q&&
  \lens{p'\circ q}{q}\to p'\\
  &Q\otimes [Q,P]\to P
  &&
  P'\to[Q,P'\otimes Q],
\end{align*}
there are many other helpful analogies as well, e.g.\ a $[-,-]$ analogue to \cref{eqn.along_composites}.
\end{remark}

Note that left adjoint functors between accessible categories are accessible.

\begin{theorem}\label{thm.catco_to_ladj}
Kan extension into a cocomplete category $\cat{D}$ constitutes a 2-functor of the form
\[
\ssmcat(-,\cat{D})\colon\ssmcata\co\to\lladj
\]
sending $\cat{C}\mapsto\ssmcat(\cat{C},\cat{D})$ and $q\colon\cat{C}\to\cat{C'}$ to the left adjoint $\lens{-}{q}\colon\ssmcat(\cat{C},\cat{D})\to\ssmcat(\cat{C'},\cat{D})$.

In particular, for accessible functors $\cat{C}\To{q}\cat{C'}\To{q'}\cat{C''}$ and a functor $p\colon\cat{C}\to\cat{D}$, there are natural isomorphisms:
\begin{equation}\label{eqn.along_composites}
\lens{\lens{p}{q}}{q'}\cong\lens{p}{q'\circ q}
\qqand
p\cong\lens{p}{\id_\cat{C}}.
\end{equation}
\end{theorem}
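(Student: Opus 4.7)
The plan is to read off the 2-functor directly from the adjunction \eqref{eqn.global_Kan} and then deduce the stated natural isomorphisms by uniqueness of left adjoints. On 0-cells, send $\cat{C}\mapsto\ssmcat(\cat{C},\cat{D})$. On 1-cells, send $q\colon\cat{C}\to\cat{C'}$ to $\lens{-}{q}$, which lands in $\lladj$ because \eqref{eqn.global_Kan} presents it as left adjoint to $-\circ q$. On 2-cells, a $\psi$ that is an arrow $q\Rightarrow q'$ in $\ssmcata\co$ (and therefore $q'\Rightarrow q$ in $\ssmcata$) is sent to $\lens{-}{\psi}\colon\lens{-}{q}\Rightarrow\lens{-}{q'}$ as built in \cref{prop.functoriality}; preservation of identities and vertical composition of 2-cells is exactly the covariant functoriality statement from that proposition.

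For the natural isomorphisms in the ``in particular'' clause, I would invoke the uniqueness of left adjoints. Composing the adjunctions for $q\colon\cat{C}\to\cat{C'}$ and $q'\colon\cat{C'}\to\cat{C''}$ gives
\[\lens{-}{q'}\circ\lens{-}{q}\;\dashv\;(-\circ q)\circ(-\circ q'),\]
and evaluating the right adjoint on an $r\colon\cat{C''}\to\cat{D}$ yields $(r\circ q')\circ q = r\circ(q'\circ q)$; so the right adjoint is $-\circ(q'\circ q)$. By \eqref{eqn.global_Kan} this same precomposition functor already admits $\lens{-}{q'\circ q}$ as a left adjoint, and uniqueness of left adjoints up to unique natural isomorphism yields a canonical $\lens{\lens{-}{q}}{q'}\cong\lens{-}{q'\circ q}$; evaluating at $p$ gives the first stated isomorphism. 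For the unit isomorphism, $-\circ\id_\cat{C}$ is the identity functor on $\ssmcat(\cat{C},\cat{D})$, which is self-adjoint, so $\lens{-}{\id_\cat{C}}\cong\id$ by the same principle, and evaluating at $p$ gives $p\cong\lens{p}{\id_\cat{C}}$.

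These same canonical isomorphisms serve as the compositor and unitor of the (pseudo-)2-functor. I would argue---rather than calculate---that the pentagon and triangle coherences are forced automatically: each side of such a coherence is a natural transformation into a functor that is a left adjoint, and any two such arising from universal properties must agree. The main obstacle in writing the proof carefully is not conceptual but bookkeeping: tracking the ``co'' on 2-cells so that $\lens{-}{\psi}$ comes out covariant in $\psi$ once the source 2-category has been $\co$'d, and---if strict rather than pseudo 2-functoriality is desired---fixing once and for all a canonical choice of left Kan extension for each $q$ so that the compositor becomes the identity.
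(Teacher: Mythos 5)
Your proposal is correct, and the framework (0-cells, 1-cells via \eqref{eqn.global_Kan}, 2-cells via \cref{prop.functoriality}) matches the paper's proof exactly. Where you diverge is in establishing the isomorphism $\lens{\lens{p}{q}}{q'}\cong\lens{p}{q'\circ q}$: the paper constructs explicit comparison maps in both directions from \eqref{eqn.coclosure} and \eqref{eqn.coeval}, namely $p\imp\lens{p}{q}\circ q\imp\lens{\lens{p}{q}}{q'}\circ q'\circ q$ versus $p\imp\lens{p}{q'\circ q}\circ q'\circ q$, and then asks the reader to verify via the triangle equations \eqref{eqn.triangle} that they are mutually inverse. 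You instead observe that the composite adjunction has right adjoint $(-\circ q)\circ(-\circ q')$, which is \emph{strictly equal} to $-\circ(q'\circ q)$, and invoke uniqueness of left adjoints; likewise $-\circ\id_{\cat{C}}$ is strictly the identity, giving the unit isomorphism. Your route is slicker and sidesteps the triangle-identity check entirely, since the isomorphism comes packaged with its inverse; the cost is that it does not hand you the explicit unit-level description of the comparison map, which is the form the paper actually uses in later computations. Your coherence argument (``forced by universal properties'') is stated loosely, but the underlying point is sound---the compositor and unitor are the unique isomorphisms of left adjoints conjugate to strict identities on the right-adjoint side, so the pentagon and triangle coherences follow from the corresponding strict equalities for precomposition---and in any case the paper leaves exactly this bookkeeping to the reader as well. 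No gap.
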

\begin{proof}
Given an accessible functor $q\colon\cat{C}\to\cat{C'}$, we have a left adjoint $\lens{-}{q}$ as in \eqref{eqn.global_Kan}. And given a map $q\limp q'$ in $\ssmcat\co(\cat{C},\cat{C'})$, we have a natural transformation $\lens{-}{q}\imp\lens{-}{q'}$ between the associated left adjoints, by \cref{prop.functoriality}. 

We now turn to composition. The right-hand isomorphism in \eqref{eqn.along_composites} follows directly from \eqref{eqn.coclosure}. At issue for the left-hand isomorphism are setups like this:
\begin{equation}\label{eqn.along_composites_diag}
\begin{tikzcd}[row sep=small]
\cat{C}\ar[r, "p"]\ar[d, "q"']&\cat{D}\\
\cat{C'}\ar[d, "q'"']\\
\cat{C''}
\end{tikzcd}
\end{equation}
The maps in either direction are induced by \eqref{eqn.coclosure} and \eqref{eqn.coeval}
\[
p\imp\lens{p}{q}\circ q\imp\lens{\lens{p}{q}}{q'}\circ q'\circ q
\qqand
p\imp\lens{p}{q'\circ q}\circ q'\circ q
\]
and we leave it to the reader to show (using \eqref{eqn.triangle}) that they are mutually inverse and to provide the remaining 2-cell composition data and coherences.
\end{proof}


\begin{proposition}\label{prop.right_adjoint_kan}
If $\ell\colon\cat{C}\to\cat{D}$ has a right adjoint, then we can obtain it by left Kan extension,
\[
	\lkan{\cat{C}}{\id_\cat{C}}{\cat{C}}{\ell}{\cat{D}}
\]
\end{proposition}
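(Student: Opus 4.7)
The plan is to exhibit $r$, equipped with the unit $\eta\colon\id_\cat{C}\imp r\circ\ell$ of the adjunction $\ell\dashv r$, as a solution to the universal property \eqref{eqn.coclosure} that characterizes $\lens{\id_\cat{C}}{\ell}$. Once this is in hand, uniqueness (up to unique isomorphism) of left Kan extensions gives $r\cong\lens{\id_\cat{C}}{\ell}$; in particular no prior existence claim for the Kan extension need be invoked.

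Concretely, for each functor $F\colon\cat{D}\to\cat{C}$, I construct the natural bijection $\Hom(r,F)\cong\Hom(\id_\cat{C},F\circ\ell)$ via the classical mate correspondence. In one direction, send $\gamma\colon r\imp F$ to the composite $\id_\cat{C}\Imp{\eta}r\circ\ell\Imp{\gamma\circ\ell}F\circ\ell$. In the other direction, send $\beta\colon\id_\cat{C}\imp F\circ\ell$ to $r\Imp{\beta\circ r}F\circ\ell\circ r\Imp{F\circ\epsilon}F$, where $\epsilon\colon\ell\circ r\imp\id_\cat{D}$ is the counit.

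The main verification is that these two assignments are mutually inverse. This reduces to the two triangle identities for $\ell\dashv r$, together with the naturality of $\eta$, $\beta$, and $\gamma$---all entirely routine. Naturality in $F$ of the resulting bijection is then immediate, and sending $\id_r$ through the forward map recovers $\eta$ itself, so $\eta$ indeed plays the role of the Kan extension unit $\eta_{\id_\cat{C}}^\ell$ from \eqref{eqn.coeval}.

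There is no substantive obstacle: the statement is essentially a repackaging of the mate correspondence for adjunctions. In fact, the same argument applies with $F$ replaced by $G\circ F$ for any further functor $G$, which shows that $r$ together with $\eta$ is an \emph{absolute} left Kan extension of $\id_\cat{C}$ along $\ell$.
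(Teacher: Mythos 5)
Your proof is correct, but it is organized differently from the paper's. The paper takes the left Kan extension $\lens{\id_\cat{C}}{\ell}$ as already existing (which it is under the chapter's standing hypotheses) and builds two comparison 2-cells: a map $\lens{\id_\cat{C}}{\ell}\imp r$ induced by the adjunction unit $\id_\cat{C}\imp r\circ\ell$ via the universal property \eqref{eqn.coclosure}, and a map $r\imp\lens{\id_\cat{C}}{\ell}$ obtained by pasting the Kan unit \eqref{eqn.coeval} whiskered by $r$ with the adjunction counit $\ell\circ r\imp\id_\cat{D}$; it then checks that these are mutually inverse. You instead verify directly that $r$, equipped with the adjunction unit, satisfies the defining bijection $\Hom(r,F)\cong\Hom(\id_\cat{C},F\circ\ell)$ --- the mate correspondence. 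The two arguments use exactly the same ingredients (unit, counit, the two triangle identities, interchange), and the mutually-inverse check you defer to ``routine'' is precisely the check the paper also defers. What your route buys is a little extra: it does not presuppose existence of the Kan extension, so it establishes the proposition without cocompleteness or accessibility hypotheses on $\cat{D}$, and your closing remark that the extension is \emph{absolute} (preserved by postcomposition with any $G$) is a genuine strengthening that the paper does not record. The paper's version is marginally shorter to state given that existence is already guaranteed in its setting.
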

\begin{proof}
Let $r\colon\cat{D}\to\cat{C}$ be right adjoint to $\ell$. The adjunction unit $\id_\cat{C}\to r\circ\ell$ induces a map $\lens{\id_\cat{C}}{\ell}\to r$. We obtain $r\to\lens{\id_\cat{C}}{\ell}$ by
\[
\begin{tikzcd}
	\cat{D}\ar[r, "r"]\ar[drr, bend right=20pt, equal, "" name=idD]&
	|[alias=C]|\cat{C}\ar[rr, equal, ""' name=idC]\ar[dr, bend right=10pt, "\ell" description]&&
	\cat{C}
	\\&&
	|[alias=D]|\cat{D}\ar[ru, bend right=10pt, pos=.6, "\lens{\id_\cat{C}}{\ell}"']
	\ar[from=C, to=idD-|C, Rightarrow]
	\ar[from=idC-|D, to=D, Rightarrow, shorten <= 2pt]
\end{tikzcd}
\]
It is easy to check that these two maps are mutually inverse, giving the required isomorphism $\lens{\id_\cat{C}}{\ell}\cong r$.
\end{proof}

%
\chapter{Pra-functors between copresheaf categories} 

Calculations of left Kan extensions are particularly easy when the categories are copresheaf categories and the functors are parametric right adjoint (pra-) functors. We begin by reminding the reader what a pra-functor between copresheaf categories is.

\begin{lemma}\label{lemma.pra}
For any functor $p\colon\cat{C}\set\to\cat{D}\set$, the following conditions are equivalent:
\begin{enumerate}[label=(\alph*)]
	\item Naturally in $X:\cat{C}\set$ and $d:\cat{D}$, the functor $p$ has the form
	\[
		p(X)(d)\cong\sum_{i:I_d}\cat{C}\set\big(A(d,i),X\big),
	\]
where $I\colon\cat{D}\to\smset$ and $A\colon(\el_\cat{D} I)\op\to\cat{C}\set$ are functors.
	\item the functor $p'\colon\cat{C}\set\cong\cat{C}\set/1\to\cat{D}\set/p(1)$, induced by slicing $p$ over the terminal object $1:\cat{C}\set$, is a right adjoint.%
	\footnote{This is the usual definition of pra-functor, and it generalizes beyond copresheaf categories. If $\Cat{C}$ has a terminal object $1$, then a functor $p\colon\Cat{C}\to\Cat{D}$ is a \emph{pra-functor} iff the induced functor $p'\colon\Cat{C}\cong\Cat{C}/1\to\Cat{D}/p(1)$ is a right adjoint.}
	\end{enumerate}
\end{lemma}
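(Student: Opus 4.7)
The plan is to prove the two implications separately. Both rest on the standard equivalence
\[
  \cat{D}\set/I \;\simeq\; (\el_\cat{D} I)\set
\]
for any $I\colon\cat{D}\to\smset$: a map $Y\to I$ on the left corresponds to the copresheaf $(d,i)\mapsto\{y\in Y(d) : y\mapsto i\}$ on the right. I also observe that setting $X=1$ in the formula of (a) yields $p(1)(d)\cong\sum_{i:I_d} 1\cong I_d$, so $I$ may always be identified with $p(1)$.

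For (a)~$\Rightarrow$~(b): I transport the sliced functor $p'$ across the equivalence. It sends $X$ to the copresheaf $\widetilde{p'}(X)\colon(d,i)\mapsto\cat{C}\set(A(d,i),X)$ on $\el_\cat{D} I$. Its left adjoint is the coend $L(Z)\coloneqq\int^{(d,i)} Z(d,i)\cdot A(d,i)$, and the required adjunction is the standard coend computation
\[
  \Hom_{\cat{C}\set}\bigl(L(Z),X\bigr)\cong\int_{(d,i)}\Hom_{\smset}\bigl(Z(d,i),\cat{C}\set(A(d,i),X)\bigr),
\]
which is exactly the hom-set in $(\el_\cat{D} I)\set$.

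For (b)~$\Rightarrow$~(a): Suppose $p'$ has a left adjoint $L$, and set $I\coloneqq p(1)$. Transporting along the equivalence, write $\widetilde{L}$ for the corresponding left adjoint $(\el_\cat{D} I)\set\to\cat{C}\set$ to $\widetilde{p'}$. Define $A\colon(\el_\cat{D} I)\op\to\cat{C}\set$ by $A(d,i)\coloneqq\widetilde{L}(R_{(d,i)})$, where $R_{(d,i)}$ is the representable copresheaf on $\el_\cat{D} I$ at $(d,i)$; contravariant functoriality in $(d,i)$ comes from that of the Yoneda embedding composed with the functoriality of $\widetilde{L}$. Then
\begin{align*}
  p(X)(d) &\cong \sum_{i:I_d}\widetilde{p'}(X)(d,i) \\
  &\cong \sum_{i:I_d} (\el_\cat{D} I)\set\bigl(R_{(d,i)},\widetilde{p'}(X)\bigr) \\
  &\cong \sum_{i:I_d}\cat{C}\set\bigl(A(d,i),X\bigr),
\end{align*}
by fiber decomposition, Yoneda on $\el_\cat{D} I$, and the adjunction $\widetilde{L}\dashv\widetilde{p'}$, respectively.

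The main obstacle I anticipate is the bookkeeping around the slice/category-of-elements equivalence: fixing conventions so that the variance of $A$ matches the statement, and confirming that the isomorphisms displayed above are natural in $d$ (not just in $X$). Once these conventions are pinned down, each direction is essentially a single Yoneda plus adjunction step, with no further delicate ingredient.
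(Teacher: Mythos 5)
Your proposal is correct and follows essentially the same route as the paper: both directions pass through the equivalence $\cat{D}\set/I\cong(\el_\cat{D}I)\set$, identify $I$ with $p(1)$, exhibit the left adjoint in (a)$\Rightarrow$(b) as the Yoneda/coend extension of $A$ (your coend $\int^{(d,i)}Z(d,i)\cdot A(d,i)$ is exactly the paper's left Kan extension of $A$ along $\yon^-$), and in (b)$\Rightarrow$(a) define $A$ by evaluating the left adjoint on representables. Your version merely spells out the fiber-decomposition/Yoneda/adjunction chain that the paper leaves implicit.
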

\begin{proof}
We prove each implication in turn. 

$(a)\imp (b)$: Assuming $(a)$, note that $\cat{C}\set(-,1_\cat{C})\cong 1$, so $p(1)\cong I$, and we need to prove that $\cat{C}\set\To{p'}\cat{D}\set/I$ is a right adjoint. But there is an isomorphism of categories $\cat{D}\set/I\cong(\el_\cat{D}I)\set$, and for any $(d,i):\ob(\el_\cat{D}I)$, we have $p'(X)(d,i)\cong\cat{C}\set(A(d,i),X)$. It follows that $p'\dashv\lens{A}{\yon^-}$ is right adjoint to the Yoneda extension, shown diagonally in the following commutative diagram:
\[
\lkan[equal]{(\el_\cat{D}I)\op}{A}{\cat{C}\set}{\yon^-}{(\el_\cat{D}I)\set}
\]

$(b)\imp (a)$: Assuming (b), let $I\coloneqq p(1)$ and let $\cat{D}\set/I\cong(\el_\cat{D}I)\set\To{A'}\cat{C}\set$ be the left adjoint of $p'$. Applying it to representable $(\el_\cat{D}I)$-sets defines $A$ and the required natural isomorphism.
\end{proof}

\begin{definition}[Pra-functor]
We say that $p\colon\cat{C}\set\to\cat{D}\set$ is a \emph{pra-functor} when either (and hence both) of the equivalent conditions in \cref{lemma.pra} hold. A morphism $p\imp q$ of pra-functors is just a natural transformation between the underlying functors.
\end{definition}

\begin{remark}[Profunctors are pra-functors]\label{rem.pro_is_pra}
A profunctor $\cat{D}\xtickar{A}\cat{C}$ can be identified with a pra-functor $\cat{C}\set\to\cat{D}\set$ for which the $I\colon\cat{D}\to\smset$ in \cref{lemma.pra}(a) is terminal, $I(-)=1$, since then $\el_\cat{D} I\cong\cat{D}$. Note, however, that maps between them are opposite.
\end{remark}

\begin{lemma}\label{lemma.pra_accessible}
Copresheaf categories are accessible, and pra-functors between them are accessible.
\end{lemma}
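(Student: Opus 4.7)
The plan splits cleanly along the two assertions. For accessibility of $\cat{C}\set$ I would just invoke the standard fact that, when $\cat{C}$ is small, $\cat{C}\set$ is locally finitely presentable: the representables $\cat{C}(c,-)$ for $c:\ob\cat{C}$ form a small set of finitely presentable generators, since evaluation at $c$ preserves all colimits in $\cat{C}\set$, and every copresheaf is a small colimit of representables. Any locally presentable category is accessible, so this half is immediate from \cite{adamek1994locally}.

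For pra-functors, I would work directly from the explicit formula in \cref{lemma.pra}(a),
\[
p(X)(d)\cong\sum_{i:I_d}\cat{C}\set\bigl(A(d,i),X\bigr),
\]
and proceed in three small steps. First, because $\cat{D}$ is small and each $I(d)$ is a set, the category $\el_\cat{D} I$ is small, so the family $\{A(d,i)\}_{(d,i):\ob\el_\cat{D} I}$ is a small set of objects of the accessible category $\cat{C}\set$. Second, each $A(d,i)$ is $\lambda_{d,i}$-presentable for some regular cardinal $\lambda_{d,i}$, and because we have only set-many indices, I can choose a single regular cardinal $\kappa$ exceeding all the $\lambda_{d,i}$; then each representable $\cat{C}\set\bigl(A(d,i),-\bigr)\colon\cat{C}\set\to\smset$ preserves $\kappa$-filtered colimits. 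Third, colimits in $\cat{D}\set$ are computed pointwise, so it suffices to show $p(-)(d)$ preserves $\kappa$-filtered colimits for each $d$; but this functor is an $I(d)$-indexed coproduct in $\smset$ of functors that each preserve $\kappa$-filtered colimits, and coproducts commute with $\kappa$-filtered colimits in $\smset$ because colimits always commute with colimits.

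There is no genuine obstacle in the argument; the only point requiring any care is the cardinal bounding in the second step, which depends on $\el_\cat{D} I$ being small—guaranteed by $\cat{D}$ being small and $I$ landing in $\smset$. Everything else is a mechanical unwinding of the definitions.
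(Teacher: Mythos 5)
Your proof is correct, but it takes a genuinely different route from the paper's on the second claim. For accessibility of copresheaf categories you invoke local finite presentability directly (representables as a small dense family of finitely presentable generators), whereas the paper notes that $\smset$ is accessible and then cites the general fact that functor categories from a small category into an accessible category are accessible; these are interchangeable. For the pra-functor half, the paper works from \cref{lemma.pra}(b): a pra-functor factors as the right adjoint $p'\colon\cat{C}\set\to\cat{D}\set/p(1)$ followed by the left adjoint $\cat{D}\set/p(1)\to\cat{D}\set$, and any left or right adjoint between accessible categories is accessible, so the composite is too. You instead work from the explicit formula in \cref{lemma.pra}(a) and run a direct cardinal-bounding argument: choose a regular $\kappa$ dominating the presentability ranks of the set-many objects $A(d,i)$ (legitimate, as you note, because $\el_\cat{D} I$ is small), observe that each $\cat{C}\set\big(A(d,i),-\big)$ then preserves $\kappa$-filtered colimits, and conclude pointwise using the fact that coproducts commute with filtered colimits. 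Your version is more hands-on and yields an explicit accessibility rank for $p$; the paper's is shorter and offloads the cardinal bookkeeping onto the standard facts about adjoints. Both arguments are sound.
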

\begin{proof}
Since every set is a filtered colimit of finite sets, and every map from a finite set to a directed colimit of sets factors through a finite stage, $\smset$ is accessible. By \cite{nlab:accessible_category}, the category of functors from a small category to an accessible category is accessible, so copresheaf categories are accessible. Again by \cite{nlab:accessible_category}, any left or right adjoint functor between accessible categories is accessible, and by \cref{lemma.pra}(b), a pra-functor $p\colon\cat{C}\set\to\cat{D}\set$ between copresheaf categories can be written as a right adjoint $p'\colon\cat{C}\set\to\cat{D}\set/p(1)$ followed by a left adjoint $\cat{D}\set/p(1)\to\cat{D}\set$.
\end{proof}

In the case $\cat{C}=1=\cat{D}$, one may identify a pra-functor $p\colon\smset\to\smset$ with a polynomial functor, i.e.\ a coproduct of representables. Indeed, with $I:\smset$ and $A\colon I\to\smset$ as in \cref{lemma.pra}(a), then with notation \cref{not.main} we can write 
\[p\cong\sum_{i:I}\smset(A_i,-)=\sum_{i:I}\yon^{A_i}\]
as a coproduct of representables; the converse is easy too.

\begin{proposition}\label{prop.pra_simple}
Let $\cat{C}$ be a small category and let $q\colon\cat{C}\set\to\smset$ be an accessible functor. Given a pra-functor $p\colon\cat{C}\set\to\smset$, the left Kan extension of $p$ along $q$
\[
\lkan{\cat{C}\set}{p}{\smset}{q}{\smset}
\]
is also a pra-functor (i.e.\ a polynomial functor). Namely, if $p\cong\sum_{i:I}\cat{C}\set(A_i,-)$ for some $A\colon I\to\cat{C}\set$, then
\begin{equation}\label{eqn.kan_pra_to_set}
\lens{p}{q}\cong\sum_{i:I}\yon^{q(A_i)}.
\end{equation}
\end{proposition}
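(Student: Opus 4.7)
The plan is to verify that the candidate polynomial functor $\sum_{i:I}\yon^{q(A_i)}$ satisfies the universal property defining $\lens{p}{q}$. First, the Kan extension exists: by \cref{lemma.pra_accessible} the source $\cat{C}\set$ and target $\smset$ are accessible, $q$ is accessible by hypothesis, $\smset$ is cocomplete, so the discussion preceding \eqref{eqn.global_Kan} applies and the coclosure adjunction \eqref{eqn.coclosure} is available. Thus to identify $\lens{p}{q}$ with $\sum_{i:I}\yon^{q(A_i)}$, it suffices to show that both represent the same functor $r\mapsto\Hom(p,r\circ q)$ on $\End(\smset)$.

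For this, I would chase both sides through coproducts and Yoneda. On the one hand, for any accessible $r\colon\smset\to\smset$,
\[
\Hom\Bigl(\textstyle\sum_{i:I}\yon^{q(A_i)},r\Bigr)\cong\prod_{i:I}\Hom(\yon^{q(A_i)},r)\cong\prod_{i:I}r(q(A_i))
\]
by the Yoneda lemma applied to each representable summand. On the other hand, using $p\cong\sum_{i:I}\cat{C}\set(A_i,-)$ and Yoneda in $\cat{C}\set$,
\[
\Hom(p,r\circ q)\cong\prod_{i:I}\Hom\bigl(\cat{C}\set(A_i,-),r\circ q\bigr)\cong\prod_{i:I}r(q(A_i)).
\]
Both bijections are natural in $r$, so by \eqref{eqn.coclosure} the functor $\sum_{i:I}\yon^{q(A_i)}$ corepresents the same thing as $\lens{p}{q}$, giving the desired isomorphism. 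Being a coproduct of representables, this is manifestly a polynomial functor, hence a pra-functor by the discussion just before the proposition.

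There is no real obstacle here; the main thing to take care of is sanity-checking that the isomorphism so constructed agrees with the one arising from the unit $\eta_p^q$, namely the map whose component at $X$ sends $(i,f\colon A_i\to X)\in p(X)$ to $(i,q(f)\colon q(A_i)\to q(X))\in\bigl(\sum_{i:I}\yon^{q(A_i)}\bigr)(q(X))$; this is immediate by tracing through the Yoneda bijections above. Naturality of everything in $X$ is automatic from the naturality of Yoneda, and I would leave the unwinding of these details to the reader.
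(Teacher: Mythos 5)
Your proposal is correct and follows essentially the same route as the paper: the paper's proof likewise reduces to exhibiting a bijection $\Hom\bigl(\sum_{i:I}\yon^{q(A_i)},r\bigr)\cong\Hom(p,r\circ q)$, natural in $r$, obtained from the universal property of coproducts and the Yoneda lemma. You merely spell out the intermediate identification of both sides with $\prod_{i:I}r(q(A_i))$ and the explicit unit map, details the paper leaves to the reader.
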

\begin{proof}
It suffices to provide a bijection, natural in $p$ and $r\colon\smset\to\smset$, between the following sets of natural transformations
\[
  \Hom\left(\sum_{i:I}\smset(q(A_i),-),r\right)
  \cong^?
	\Hom\left(\sum_{i:I}\cat{C}\set(A_i,-),r\circ q\right).
\]
It arises directly from the universal property of coproducts and the Yoneda lemma.
\end{proof}

\begin{remark}\label{rem.pra_general}
In fact, \cref{prop.pra_simple} generalizes significantly, and the added proof difficulty is only bookkeeping; we leave it as an exercise to the reader (or see \cite{spivak2025functorial}). 

Given small categories $\cat{C}$, $\cat{C'}$, and $\cat{D}$, an accessible functor $q\colon\cat{C}\set\to\cat{C'}\set$, and a pra-functor $p\colon\cat{C}\set\to\cat{D}\set$, the left Kan extension of $p$ along $q$
\[
  \lkan{\cat{C}\set}{p}{\cat{D}\set}{q}{\cat{C'}\set}
\]
is again a pra-functor $\lens{p}{q}\colon\cat{C'}\set\to\cat{D}\set$. Indeed, naturally in $X:\cat{C}\set$ and $d:\cat{D}$, we have by \cref{lemma.pra} that
\[
  p(X)(d)\cong\sum_{i:I_d}\cat{C}\set(A_{d,i},X),
\]
where $I\colon\cat{D}\to\smset$ and $A_{d,i}:\cat{C}\set$. The left Kan extension is given on $X':\cat{C'}\set$ by:
\[
\lens{p}{q}(X')(d)\cong\sum_{i:I_d}\cat{C'}\set(q(A_{d,i}),X')
\]
which is a pra-functor again by \cref{lemma.pra}.
\end{remark}

\begin{example}\label{ex.recover_site}
Recall from \cref{not.main} that if $c:\ob\cat{C}$ is an object in a small category, $\cat{C}[c]$ denotes the set of all outgoing maps.

Let $\elts[\cat{C}][\cat{C}]\colon\cat{C}\set\to\smset$ be given by $\elts[\cat{C}](X)\coloneqq\sum_{c:\ob\cat{C}}X(c)$, the objects in $X$'s category of elements. We can rewrite $X(c)=\cat{C}\set(\cat{C}(c,-),X)$, so by \cref{prop.pra_simple},
\begin{equation}\label{eqn.early_outfacing_poly}
\lens{\elts[\cat{C}]}{\elts[\cat{C}]}\cong\sum_{c:\ob\cat{C}}\yon^{\cat{C}[c]}\,,
\end{equation}
since $\elts[\cat{C}](\cat{C}(c,-))=\cat{C}[c]$. As we will see in \cref{ex.density_comonad}, the polynomial \eqref{eqn.early_outfacing_poly} carries a comonad structure that reconstructs $\cat{C}$ from the functor $\elts[\cat{C}]$.
\end{example}

\begin{example}\label{ex.monad_in_span}
A category $\cat{C}$ can be identified with a monad in the bicategory of set-spans, say $\ob\cat{C}\From{\dom} \Mor\cat{C}\To{\cod}\ob\cat{C}$. In turn, set-spans $A\from C\to B$ can be identified with left adjoint pra-functors of the form $A\set\to B\set$. Thus a category can be identified with a set $O\coloneqq\ob\cat{C}$ together with a left adjoint pra-functor monad $m\colon O\set\to O\set$; it sends $X$ to $o\mapsto\sum_{f\colon o'\to o}X(o')$. Using \cref{prop.right_adjoint_kan,rem.pra_general}, one could calculate $m$'s right adjoint $\lens{\id_{O\set}}{m}$, which will be a comonad on $O\set$.

Instead, let's consider the left Kan extension $\lens{\elts[O]}{\elts[O]\circ m}$, where $\elts[O]\colon O\set\to\smset$ is as in \cref{ex.recover_site}. By \cref{prop.pra_simple}, we calculate this to be (just as in \eqref{eqn.early_outfacing_poly}),
\[
\lens{\elts[O]}{\elts[O]\circ m}\cong\sum_{c:\ob\cat{C}}\yon^{\cat{C}[c]}.
\qedhere
\]
\end{example}

\begin{example}
The category of pra-functors $\cat{C}\set\to\cat{D}\set$ has a monoidal structure, defined in terms of left Kan extensions \cite{day1969enriched}. Its unit and the monoidal product of $p_1,p_2\colon\cat{C}\set\to\cat{D}\set$ are given as follows:
\[
	\lkan{1}{1}{\cat{D}\set}{1}{\cat{C}\set}
\qqand
	\lkan{(\cat{C}+\cat{C})\set}{p_1\times p_2}{\cat{D}\set}{(\times)}{\cat{C}\set}
\]
where for any $(X_1,X_2):\cat{C}\set\times\cat{C}\set\cong(\cat{C}+\cat{C})\set$, we have
\[
	(\times)(X_1,X_2)(c)\coloneqq X_1(c)\times X_2(c) 
  \qand
  (p_1\times p_2)(X_1,X_2)(d)\coloneqq p_1(X_1)(d)\times p_2(X_2)(d).
\qedhere
\]
\end{example}

\begin{example}\label{ex.simplicial}
Let $\cat{G}\coloneqq\begin{tikzcd}[ampersand replacement=\&, column sep=small, execute at end picture={\draw (\tikzcdmatrixname.north west) rectangle (\tikzcdmatrixname.south east);}]E\ar[r, shift left=3pt]\ar[r, shift right=3pt]\&V\end{tikzcd}$\, be the graph-indexing category, and let $\paths\colon\cat{G}\set\to\smset$ be the functor sending a graph to the set of all paths in it
\[
\paths(X)\coloneqq\sum_{n:\nn}\cat{G}\set(\ul{n},X),
\where[] \ul{n}\coloneqq\bullet\To{e_1}\cdots\To{e_n}\bullet.
\]
Let $\frcat\colon\cat{G}\set\to\cat{G}\set$ denote the usual free-category monad, so $\paths(G)=\frcat(G)(E)$. Then we compute
\[
\lens{\paths}{\paths\,\circ\,\frcat}\cong\sum_{n:\nn}\smset(\paths(\frcat(\ul{n})),-).
\]

Write $\simp$ for the category of finite ordered sets; its object set is in bijection with $\nn$. Then again with notation from \cref{not.main}, there is a natural isomorphism
\begin{equation}\label{eqn.simp}
  \lens{\paths}{\paths\,\circ\,\frcat}
  \cong
  \sum_{n:\ob\simp\op}\yon^{\simp\op[n]}.
\end{equation}
As we will see in \cref{ex.density_comonad}, the polynomial \eqref{eqn.simp} carries a comonad structure that reconstructs the simplicial indexing category $\simp\op$.
\end{example}

\chapter{Generalizing density comonads}\label{chap.density_comonad}

A right Kan extension of a functor $p\colon\cat{C}\to\cat{D}$ along itself is always a monad---called the codensity monad---and similarly a left Kan extension $\lens{p}{p}$ of $p$ along itself is a comonad---called the \emph{density comonad} \cite{leinster2012codensity,avery2016codensity,adamek2018formula,Abramsky2022discrete}; see \cref{ex.density_comonad}. In this section we generalize this construction in a number of ways, by involving monads in the contravariant position and comonads in the covariant position of $\lens{-}{-}$.


\begin{theorem}\label{thm.colax}
Given an accessible category $\cat{C}$, a cocomplete category $\cat{D}$, and accessible functor $p\colon\cat{C}\to\cat{D}$, the functor
\[
\lens{p\circ -}{p}\colon
	\Big(\End(\cat{C}), \id_\cat{C},\circ\Big)
	\to
	\Big(\End(\cat{D}), \id_\cat{D},\circ\Big)
\]
is colax monoidal. In particular, there are coherent counit and comultiplication maps:
\begin{equation}\label{eqn.colaxators}
  \lens{p}{p}\imp\id_\cat{C}.
  \qqand
  \lens{p\circ q\circ r}{p}\imp\lens{p\circ q}{p}\circ\lens{p\circ r}{p}.
\end{equation}
\end{theorem}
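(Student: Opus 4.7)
The approach is to construct the counit and comultiplication using the universal property \eqref{eqn.coclosure} of Kan extensions, then verify the coherence by transposing the required equalities back to equalities of 2-cells between functors $\cat{C}\to\cat{D}$; these will reduce to the triangle equations \eqref{eqn.triangle} combined with the 2-categorical interchange law. Functoriality of $\lens{p\circ-}{p}\colon\End(\cat{C})\to\End(\cat{D})$ is itself immediate from \cref{prop.functoriality}, applied to the $p$-whiskering of a natural transformation $q\imp q'$ in $\End(\cat{C})$.

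For the counit $\lens{p}{p}\imp\id_\cat{D}$ I take the transpose of $\id_p$ under the bijection $\Hom(\lens{p}{p},\id_\cat{D})\cong\Hom(p,p)$ from \eqref{eqn.coclosure}; equivalently, this is $\epsilon_{\id_\cat{D}}^p$ of \eqref{eqn.counit}. For the comultiplication $\lens{p\circ q\circ r}{p}\imp\lens{p\circ q}{p}\circ\lens{p\circ r}{p}$, the universal property reduces the problem to constructing a 2-cell $p\circ q\circ r\imp\lens{p\circ q}{p}\circ\lens{p\circ r}{p}\circ p$, and I build it as the composite of two whiskered units \eqref{eqn.coeval}:
\[
p\circ q\circ r
\Imp{\eta_{p\circ q}^p\circ r}
\lens{p\circ q}{p}\circ p\circ r
\Imp{\lens{p\circ q}{p}\circ\eta_{p\circ r}^p}
\lens{p\circ q}{p}\circ\lens{p\circ r}{p}\circ p.
\]

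Naturality in $q$ and $r$ then follows, after transposition, from naturality of $\eta_{(-)}^p$ together with \cref{prop.functoriality}. Each of the two counit axioms transposes to a statement about the composite of $\epsilon_{\id_\cat{D}}^p$ with a whiskered unit: in the left case, one application of interchange slides $\epsilon_{\id_\cat{D}}^p$ past $\eta_{p\circ r}^p$, after which the $r$-whisker of the first triangle identity in \eqref{eqn.triangle} collapses the composite to $\eta_{p\circ r}^p$; the right case applies the first triangle identity directly inside a whiskering by $\lens{p\circ q}{p}$. I expect the main obstacle to be coassociativity: transposing each of $(\Delta\circ\id)\circ\Delta$ and $(\id\circ\Delta)\circ\Delta$ under \eqref{eqn.coclosure} yields a 2-cell $p\circ q\circ r\circ s\imp\lens{p\circ q}{p}\circ\lens{p\circ r}{p}\circ\lens{p\circ s}{p}\circ p$ assembled from three whiskered copies of $\eta^p_{(-)}$ inserted in two different orders, and proving these composites agree is a pentagon-style diagram chase that uses only naturality of $\eta^p_{(-)}$ and the 2-categorical interchange law.
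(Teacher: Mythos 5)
Your proposal matches the paper's proof essentially step for step: the counit is $\epsilon^p_{\id}$ from \eqref{eqn.counit}, the comultiplication is the transpose under \eqref{eqn.coclosure} of the same two-whiskered-unit composite $p\circ q\circ r\imp\lens{p\circ q}{p}\circ p\circ r\imp\lens{p\circ q}{p}\circ\lens{p\circ r}{p}\circ p$, and coassociativity is verified by exactly the kind of diagram chase you anticipate (the paper's \cref{fig.colax_coassociativity}, whose squares are interchange and whose one triangle is \eqref{eqn.triangle}). The only caveat is that the coassociativity chase does invoke the triangle identity \eqref{eqn.triangle} (equivalently, the defining property $\epsilon\circ(\lens{f}{q}\circ q)\circ\eta=f$ of the transpose), not solely naturality of $\eta$ and interchange as you claim, but this is a minor imprecision rather than a gap.
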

\begin{proof}
The former map of \eqref{eqn.colaxators} comes directly from \eqref{eqn.counit}. The latter is induced by applying \eqref{eqn.coclosure} to the map
\[
  p\circ q\circ r\imp
  \lens{p\circ q}{p}\circ p\circ r\imp
  \lens{p\circ q}{p}\circ\lens{p\circ r}{p}\circ p
\]
which is itself induced by two applications of \eqref{eqn.coeval}. We will prove coassociativity and leave counitality to the reader.

The coassociativity equation is that the natural transformation (written below using $pq$ rather than $p\circ q$, for typographical reasons) across the top-and-right is equal to that along the left-and-bottom of the following diagram, where every map is either the unit \eqref{eqn.coeval} or the counit \eqref{eqn.counit} of an adjunction like \eqref{eqn.global_Kan}; see \cref{fig.colax_coassociativity}, page \pageref{fig.colax_coassociativity}.
\begin{figure}
\[
\begin{tikzcd}[row sep=small, every arrow/.append style = {Rightarrow}]
	\lens{pqrs}{p}\ar[r]\ar[d]&
		\lens{\lens{pq}{p}prs}{p}\ar[r]\ar[d]&
			\lens{\lens{pq}{p}\lens{pr}{p}ps}{p}\ar[d]\ar[rd, equal]\\
	\lens{\lens{pqr}{p}ps}{p}\ar[r]\ar[d]&
		\lens{\lens{\lens{pq}{p}pr}{p}ps}{p}\ar[r]\ar[d]&
			\lens{\lens{\lens{pq}{p}\lens{pr}{p}p}{p}ps}{p}\ar[r]\ar[d]&
				\lens{\lens{pq}{p}\lens{pr}{p}ps}{p}\ar[d]\\
	\lens{\lens{pqr}{p}\lens{ps}{p}p}{p}\ar[r]\ar[d]&
		\lens{\lens{\lens{pq}{p}pr}{p}\lens{ps}{p}p}{p}\ar[r]\ar[d]&
			\lens{\lens{\lens{pq}{p}\lens{pr}{p}p}{p}\lens{ps}{p}p}{p}\ar[r]\ar[d]&
				\lens{\lens{pq}{p}\lens{pr}{p}\lens{ps}{p}p}{p}\ar[d]\\
	\lens{pqr}{p}\lens{ps}{p}\ar[r]&
		\lens{\lens{pq}{p}pr}{p}\lens{ps}{p}\ar[r]&
			\lens{\lens{pq}{p}\lens{pr}{p}p}{p}\lens{ps}{p}\ar[r]&
				\lens{pq}{p}\lens{pr}{p}\lens{ps}{p}
\end{tikzcd}
\]
\caption{Proof of coassociativity for the colax monoidal functor $\lens{p\circ -}{p}$ associated to any accessible $p\colon\cat{C}\to\cat{D}$ and cocomplete $\cat{D}$. See \cref{thm.monad_comonad_dist}.}\label{fig.colax_coassociativity}
\end{figure}
Every square in it is essentially an interchange isomorphism, and the triangle is \eqref{eqn.triangle}.
\end{proof}

Here is the setup for the following corollary, shown in two ways:
\[
\begin{tikzcd}
	\cat{C}\ar[r, "k"]\ar[d, "p"']&\cat{C}\ar[r, "p"]\ar[d, Rightarrow, shorten = 3pt]&\cat{D}\\
	\cat{D}\ar[urr, bend right, "\lens{p\circ k}{p}"', "" name=kan]&~
\end{tikzcd}
\hspace{.8in}
\begin{tikzcd}[column sep=large, row sep=25pt]
	\cat{C}\ar[r, "k", ""' name=mapr]\ar[d, "p"']&\cat{C}\ar[d, "p"]\\
	\cat{D}\ar[r, "\lens{p\circ k}{p}"', "" name=mapr']&\cat{D}
	\ar[from=mapr, to=mapr', Rightarrow, shorten=3pt]
\end{tikzcd}
\]
The corollary says that if $k$ is a comonad on $\cat{C}$, then $\lens{p\circ k}{p}$ is a comonad on $\cat{D}$.

\begin{corollary}\label{cor.cmd}
	Let $\cat{C}$ and $\cat{D}$ be accessible categories, with $\cat{D}$ cocomplete. For any accessible functor $p\colon\cat{C}\to\cat{D}$ there is a functor
	\[
	\lens{p\circ -}{p}\colon\comnd(\cat{C})\to\comnd(\cat{D})
	\]
	taking accessible comonads on $\cat{C}$ to accessible comonads on $\cat{D}$.
\end{corollary}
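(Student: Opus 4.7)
The plan is to recognize this corollary as a consequence of the general fact that colax monoidal functors carry comonoids to comonoids. A comonad on $\cat{C}$ is precisely a comonoid in the monoidal category $(\End(\cat{C}),\id_\cat{C},\circ)$, and likewise for $\cat{D}$. Since $\lens{p\circ-}{p}\colon\End(\cat{C})\to\End(\cat{D})$ is colax monoidal by \cref{thm.colax}, it induces a functor between the corresponding categories of comonoids, which is exactly the functor claimed. Note that accessibility of the output is already implicit, because by convention $\End(\cat{D})$ consists of accessible endofunctors on $\cat{D}$; \cref{thm.colax} itself therefore already guarantees that $\lens{p\circ k}{p}$ is accessible whenever $k$ and $p$ are.

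Concretely, given an accessible comonad $(k,\delta,\varepsilon)$ on $\cat{C}$, I would take the counit of $\lens{p\circ k}{p}$ to be the composite $\lens{p\circ k}{p}\imp\lens{p}{p}\imp\id_\cat{D}$ obtained by applying \cref{prop.functoriality} to $\varepsilon\colon k\imp\id_\cat{C}$ and then postcomposing with the counital colaxator of \eqref{eqn.colaxators}. The comultiplication would be the composite $\lens{p\circ k}{p}\imp\lens{p\circ k\circ k}{p}\imp\lens{p\circ k}{p}\circ\lens{p\circ k}{p}$ built analogously from $\delta$ and the comultiplicative colaxator. For the functoriality assertion, a comonad morphism $\varphi\colon k\imp k'$ is sent to $\lens{p\circ\varphi}{p}$, which is a natural transformation by \cref{prop.functoriality}.

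The main task is then to verify three coherence conditions: counitality and coassociativity for the proposed comonad structure on $\lens{p\circ k}{p}$, and the statement that $\lens{p\circ\varphi}{p}$ respects counits and comultiplications. Each reduces to a formal diagram chase combining the comonad axioms for $k$, the naturality of the colaxators in their endofunctor arguments, and the coherence established in \cref{thm.colax}; in particular, the coassociativity check is essentially dual to the diagram drawn in \cref{fig.colax_coassociativity}. These verifications are not conceptually hard, but the pasting diagrams are sizable, so in a written proof I would either carry them out explicitly or, more cleanly, simply invoke the general categorical theorem that colax monoidal functors preserve comonoids and cite \cref{thm.colax} as the source of the colax monoidal structure.
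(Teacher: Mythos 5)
Your proposal is correct and takes exactly the paper's approach: the paper's entire proof is the one-line observation that the corollary follows from \cref{thm.colax} because colax monoidal functors preserve comonoids. Your explicit unpacking of the counit, comultiplication, and action on comonad morphisms is a faithful elaboration of what that general principle yields.
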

\begin{proof}
Follows from \cref{thm.colax}, since colax monoidal functors preserve comonoids.
\end{proof}

Recall from \cref{not.main} that $\then$ denotes the reverse of $\circ$.

\begin{theorem}\label{thm.colax_along}
Given an accessible category $\cat{C}$, a cocomplete category $\cat{D}$, and accessible functor $p\colon\cat{C}\to\cat{D}$, there is a colax monoidal functor
\[
\lens{p}{p\circ -}\colon
	\Big(\End(\cat{C})\op, \id_\cat{C},\circ\Big)
	\to
	\Big(\End(\cat{D}), \id_\cat{D},\then\Big)
\]
That is, there are coherent counit and comultiplication maps:
\begin{equation}\label{eqn.colax_along}
  \lens{p}{p}\imp\id_\cat{C}
  \qqand
  \lens{p}{p\circ q\circ r}\imp\lens{p}{p\circ q}\then\lens{p}{p\circ r}.
\end{equation}

\end{theorem}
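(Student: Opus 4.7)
The plan is to mirror the proof strategy of Theorem \ref{thm.colax}, making use of the contravariance of $\lens{p}{-}$ in its second slot (Proposition \ref{prop.functoriality}) and the reversed monoidal structure $(\id_\cat{D},\then)$ on the target. Unpacking the convention $a\then b = b\circ a$, the comultiplication is really a map $\lens{p}{p\circ q\circ r}\imp\lens{p}{p\circ r}\circ\lens{p}{p\circ q}$, so the two ``nested'' Kan extensions appear in the opposite order from Theorem \ref{thm.colax}, and this reversal will dictate the whiskering pattern in the construction.

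First I would take the counit to be $\epsilon^p_{\id_\cat{D}}\colon\lens{p}{p}\imp\id_\cat{D}$ from \eqref{eqn.counit} (the $\id_\cat{C}$ in \eqref{eqn.colax_along} being a typo, since $\lens{p}{p}$ is an endofunctor on $\cat{D}$). For the comultiplication I would assemble the natural transformation
\[
p\Imp{\eta^{p\circ r}_p}\lens{p}{p\circ r}\circ p\circ r\Imp{\lens{p}{p\circ r}\,\circ\,\eta^{p\circ q}_p\,\circ\,r}\lens{p}{p\circ r}\circ\lens{p}{p\circ q}\circ p\circ q\circ r
\]
by first applying the coevaluation unit \eqref{eqn.coeval} at $p\circ r$ and then whiskering a second coevaluation into the middle $p$. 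Transposing this across the universal property \eqref{eqn.coclosure} of $\lens{p}{p\circ q\circ r}$ gives the required map $\lens{p}{p\circ q\circ r}\imp\lens{p}{p\circ r}\circ\lens{p}{p\circ q} = \lens{p}{p\circ q}\then\lens{p}{p\circ r}$.

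It remains to check counitality and coassociativity. Counitality reduces to two applications of the triangle identities \eqref{eqn.triangle}, inserted on either side of the comultiplication to cancel the $\eta/\epsilon$ pairs. For coassociativity, applied to $q,r,s:\End(\cat{C})$, I would draw the ``mirror image'' of Figure \ref{fig.colax_coassociativity}, producing a grid of natural transformations relating $\lens{p}{p\circ q\circ r\circ s}$ to $\lens{p}{p\circ s}\circ\lens{p}{p\circ r}\circ\lens{p}{p\circ q}$ in which every square is an interchange isomorphism and the single triangle is \eqref{eqn.triangle}. The main obstacle is purely bookkeeping: because the target monoidal product is $\then$, the outermost factor in the output is always $\lens{p}{p\circ s}$ rather than $\lens{p}{p\circ q}$, so one must track carefully which $\eta$ is whiskered on the left versus the right of which existing expression in order for the pentagon to close. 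Once this is done correctly, commutativity follows by exactly the same interchange-plus-triangle argument used for Theorem \ref{thm.colax}.
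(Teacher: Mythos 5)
Your proposal is correct and follows essentially the same route as the paper's (very terse) proof: the counit comes from \eqref{eqn.counit}, the comultiplication is obtained by transposing the two-stage whiskered unit $p\imp\lens{p}{p\circ r}\circ\lens{p}{p\circ q}\circ p\circ q\circ r$ across \eqref{eqn.coclosure}, and coassociativity is a mirrored version of the diagram in \cref{fig.colax_coassociativity}. You also correctly flag that the codomain of the counit should read $\id_\cat{D}$ rather than $\id_\cat{C}$, and your displayed target of the second whiskering retains the middle factor $p$ that is inadvertently dropped in the paper's display.
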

\begin{proof}
Again, the former map is \eqref{eqn.counit} and the latter map is induced by
\[
  p\imp 
  \lens{p}{p\circ r}\circ p\circ r\imp 
  \lens{p}{p\circ r}\circ \lens{p}{p\circ q}\circ q\circ r.
\]
The proof is analogous to that of \cref{thm.colax}. 
\end{proof}

Here is the setup for the following corollary, shown in two ways
\[
\begin{tikzcd}
	\cat{C}\ar[r, "p", ""' name=p]\ar[d, "t"']&\cat{D}\\
	\cat{C}\ar[d, "p"']\\
	\cat{C}\ar[uur, bend right, "\lens{p}{p\circ t}"', "" name=kan]
	\ar[from=p, to=kan-|p, shorten <= 3pt, Rightarrow]
\end{tikzcd}
\hspace{.8in}
\begin{tikzcd}[row sep=35pt]
  \cat{C}\ar[r, "p", ""' name=mapp]\ar[d, "t"']&\cat{D}\ar[from=d, "\lens{p}{p\circ t}"']\\
  \cat{C}\ar[r, "p"', "" name=mapp']&\cat{D}
  \ar[from=mapp, to=mapp', Rightarrow, shorten=6pt]
\end{tikzcd}
\]
The corollary says that if $t$ is a monad on $\cat{C}$, then $\lens{p}{p\circ t}$ is a comonad on $\cat{D}$.

\begin{corollary}\label{cor.cmd_along}
	Let $\cat{C}$ and $\cat{D}$ be accessible categories, with $\cat{D}$ cocomplete. For any accessible functor $p\colon\cat{C}\to\cat{D}$ there is a functor
	\[
	\lens{p}{p\circ -}\colon\mnd(\cat{C})\op\to\comnd(\cat{D})
	\]
	taking accessible monads on $\cat{C}$ to accessible comonads on $\cat{D}$.
\end{corollary}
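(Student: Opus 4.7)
The plan is to imitate the proof of \cref{cor.cmd}, using \cref{thm.colax_along} in place of \cref{thm.colax}. The key fact is that any colax monoidal functor preserves comonoids together with their morphisms, so it suffices to recognize $\mnd(\cat{C})\op$ and $\comnd(\cat{D})$ as the categories of comonoids in, respectively, the source and target monoidal categories of the colax functor $\lens{p}{p\circ -}$ provided by \cref{thm.colax_along}.

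First I would note that a monad on $\cat{C}$ is by definition a monoid in $(\End(\cat{C}), \id_\cat{C}, \circ)$, equivalently a comonoid in the opposite monoidal category $(\End(\cat{C})\op, \id_\cat{C}, \circ)$, with monad morphisms corresponding to comonoid morphisms in the opposite. Next I would check that a comonoid in the target $(\End(\cat{D}), \id_\cat{D}, \then)$ has the same underlying data as a comonad on $\cat{D}$, since $k \then k = k\circ k$; unwinding the convention $\alpha\then\beta = \beta\circ\alpha$ shows that the coassociativity axiom with respect to $\then$ is literally the usual comonad coassociativity, and similarly the counit axioms coincide. Hence comonoids in the target category are precisely comonads on $\cat{D}$.

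Combining these two identifications with the colax monoidal functor of \cref{thm.colax_along} yields the desired functor $\lens{p}{p\circ -}\colon\mnd(\cat{C})\op\to\comnd(\cat{D})$. Its action on morphisms is the usual induced action of a colax monoidal functor on comonoid morphisms, which in turn relies on the contravariant functoriality of $\lens{p}{-}$ in its denominator established in \cref{prop.functoriality}: a monad morphism $t' \to t$, viewed as a morphism $t \to t'$ in $\mnd(\cat{C})\op$, produces a comonad morphism $\lens{p}{p\circ t}\imp\lens{p}{p\circ t'}$. I do not anticipate any real obstacle; the only care required is the bookkeeping around the $\then$ vs.\ $\circ$ convention on the target, which is resolved by the direct observation about comonoids given above.
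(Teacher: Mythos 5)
Your proposal is correct and matches the paper's own proof: both identify monads on $\cat{C}$ with comonoids in $\End(\cat{C})\op$, observe that comonoids in $(\End(\cat{D}),\id_\cat{D},\then)$ are the same as ordinary comonads on $\cat{D}$, and then apply the fact that the colax monoidal functor of \cref{thm.colax_along} preserves comonoids and their morphisms. The extra remarks about contravariant functoriality in the denominator are consistent with \cref{prop.functoriality} and only spell out what the paper leaves implicit.
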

\begin{proof}
An accessible monad on $\cat{C}$ is a comonoid object in $\End(\cat{C})\op$, and a morphism of monads is a morphism in the opposite direction between the comonoids. By \cref{thm.colax_along}, monads on $\cat{C}$ are sent to comonoids in the reverse monoidal structure $(\End(\cat{D}),\id_\cat{D},\then)$, but these are the same as comonoids in $\End(\cat{D})$ with the usual $(\circ)$ monoidal structure.
\end{proof}

\begin{example}[Density comonad]\label{ex.density_comonad}
Applying either \cref{cor.cmd} or \cref{cor.cmd_along} to an accessible $p\colon\cat{C}\to\cat{D}$ and the identity endofunctor on $\cat{C}$---which is both a comonad and a monad---we obtain $\lens{p}{p}$, which is the associated density comonad.
\end{example}

\begin{remark}\label{rem.bicomodules}
Let $\cat{C},\cat{D}$ be as in \cref{cor.cmd,cor.cmd_along}. The functor $\lens{p\circ -}{p}$ (resp.\  $\lens{p}{p\circ -}$) sends bicomodules (resp.\ bimodules in the reverse direction) to bicomodules:
\begin{align*}
  (k_1\circ q\from q\to q\circ k_2)&\mapstoo
  \left(\lens{p\circ k_1}{p}\circ\lens{p\circ q}{p}\from
  		\lens{p\circ q}{p}\to
  			\lens{p\circ q}{p}\circ\lens{p\circ k_2}{p}\right)\\
	(t_2\circ q\to q\from q\circ t_1)&\mapstoo
	\left(\lens{p}{p\circ t_1}\circ\lens{p}{p\circ q}\from
		\lens{p}{p\circ q}\to
			\lens{p}{p\circ q}\circ\lens{p}{p\circ t}\right)
\end{align*}
Also, for any $p,q\colon\cat{C}\to\cat{D}$, and accessible comonad $k:\comnd(\cat{C})$ (resp.\  accessible monad $t$), there is a bicomodule of the form
\begin{align}\label{eqn.bico_change_poly}
  \lens{p\circ k}{p}\circ\lens{p\circ k}{q}\fromm
  \lens{p\circ k}{q}&\too
  \lens{p\circ k}{q}\circ\lens{q\circ k}{q}\\
  \nonumber
  \left(\;\tn{resp.\quad }
  \lens{p}{p\circ t}\circ\lens{p}{q\circ t}\fromm
  \lens{p}{q\circ t}\right.&\left.\too
  \lens{p}{q\circ t}\circ\lens{q}{q\circ t}
  \;\right)
\end{align}
\end{remark}

Recall \cite{beck1969distributive,brookes1993monads} that a distributive law of a monad $t$ over a comonad $k$ is a map
$
	t\circ k\to k\circ t
$
satisfying ``the evident'' four laws, addressing (co)unit and (co)multiplication.

The following theorem generalizes \cref{cor.cmd,cor.cmd_along} since every monad distributes over the identity comonad and the identity monad distributes over every comonad. Here is a picture of the setup:
\[
\begin{tikzcd}
  \cat{C}\ar[r, "k", ""' name=topleft]\ar[d, "t"']&\cat{C}\ar[r, "k", ""' name=topright]\ar[d, "t"']&\cat{C}\ar[d, "p"]\\
  \cat{C}\ar[r, "k" description, "" name=midleft1, ""' name=midleft2]\ar[d, "t"']&\cat{C}\ar[d, "p"]&\cat{D}\\
  \cat{C}\ar[d, "p"']&\cat{D}\ar[ur, "\lens{p\circ k}{p\circ t}"', "" name=botright]\\
  \cat{D}\ar[ur, "\lens{p\circ k}{p\circ t}"', "" name=botleft]
  \ar[from=topleft, to=midleft1-|topleft, shorten=3pt, Rightarrow]
  \ar[from=midleft2-|topleft, to=botleft-|topleft, shorten=5pt, Rightarrow]
  \ar[from=topright, to=botright-|topright, shorten=5pt, Rightarrow]
\end{tikzcd}
\]

\begin{theorem}\label{thm.monad_comonad_dist}
If $t$ is an accessible monad on $\cat{C}$, $k$ is an accessible comonad on $\cat{C}$, $\alpha\colon t\circ k\to k\circ t$ is a distributive law, and $p\colon\cat{C}\to\cat{D}$ is any accessible endofunctor, then $\lens{p\circ k}{p\circ t}$ has the structure of a comonad on $\cat{D}$.
\end{theorem}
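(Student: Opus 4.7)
My plan is to construct the comonad structure on $m \coloneqq \lens{p\circ k}{p\circ t}$ directly, by specifying its counit and comultiplication via the universal property \eqref{eqn.coclosure} of the left Kan extension, with the distributive law $\alpha$ serving as the glue between the monad and comonad data. The construction specializes to the one implicit in \cref{cor.cmd} when $t = \id_\cat{C}$ and to the one implicit in \cref{cor.cmd_along} when $k = \id_\cat{C}$; in either special case $\alpha$ is forced to be trivial.

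For the counit $\epsilon_m\colon m \imp \id_\cat{D}$, \eqref{eqn.coclosure} with codomain $\id_\cat{D}$ identifies such maps with natural transformations $p\circ k \imp p\circ t$, so I would take the evident composite $p \circ k \Imp{p\epsilon_k} p \Imp{\eta_t p} p\circ t$ built from the comonad counit $\epsilon_k$ and the monad unit $\eta_t$. For the comultiplication $\delta_m\colon m \imp m\circ m$, \eqref{eqn.coclosure} with codomain $m\circ m$ calls for a natural transformation $p\circ k \imp m\circ m\circ p\circ t$, which I would assemble as
\[
p\circ k \Imp{p\delta_k} p\circ k\circ k \Imp{\eta_{p\circ k}^{p\circ t}\,k} m\circ p\circ t\circ k \Imp{m\,p\,\alpha} m\circ p\circ k\circ t \Imp{m\,\eta_{p\circ k}^{p\circ t}\,t} m\circ m\circ p\circ t\circ t \Imp{m\,m\,p\,\mu_t} m\circ m\circ p\circ t,
\]
using, in order, the comonad comultiplication $\delta_k$; the Kan extension unit $\eta_{p\circ k}^{p\circ t}$ of \eqref{eqn.coeval} whiskered by $k$; the distributive law $\alpha$ whiskered by $p$ on the left and $m$ further on the left; the Kan extension unit again, whiskered by $m$ and $t$; and finally the monad multiplication $\mu_t$.

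The main obstacle will be verifying counitality and coassociativity. Counitality should be relatively quick: it follows by combining the (co)unit axioms for $t$ and $k$, the two compatibility axioms of $\alpha$ with $\eta_t$ and $\epsilon_k$, and the triangle equations \eqref{eqn.triangle} (which absorb an $\eta_{p\circ k}^{p\circ t}$ composed with the counit of the defining adjunction). Coassociativity is the heart of the proof and will demand a sizable diagram chase in the spirit of \cref{fig.colax_coassociativity}; its inputs are coassociativity of $\delta_k$, associativity of $\mu_t$, the remaining two axioms of $\alpha$ (its pentagon-type compatibilities with $\delta_k$ and with $\mu_t$), interchange for the various whiskerings, and the triangle equations \eqref{eqn.triangle} applied to eliminate repeated occurrences of $\eta_{p\circ k}^{p\circ t}$. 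Each face of the resulting diagram should fall into exactly one of these categories, so the verification is lengthy but entirely mechanical — no new idea beyond the distributive-law axioms is required.
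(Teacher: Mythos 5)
Your proposal is correct and matches the paper's proof essentially verbatim: the counit is induced by $p\circ k\Rightarrow p\Rightarrow p\circ t$ and the comultiplication by the same six-step composite $pk\Rightarrow pkk\Rightarrow \lens{pk}{pt}ptk\Rightarrow \lens{pk}{pt}pkt\Rightarrow \lens{pk}{pt}\lens{pk}{pt}ptt\Rightarrow \lens{pk}{pt}\lens{pk}{pt}pt$, with coassociativity checked by exactly the kind of large diagram chase you describe (the paper's \cref{fig.assoc}) and counitality likewise left as routine.
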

\begin{proof}
For brevity, we elide $\circ$'s in this proof. The counit $\lens{pk}{pt}\to\id$ is induced by the composite $pk\To{\epsilon} p\To{\eta} pt$. The comultiplication is induced by (co)monad maps, distributivity, and \eqref{eqn.coeval}:
\begin{align*}
	pk\To{\delta} 
	pkk&\to
	\lens{p k}{p t} p t k\To{\alpha}
	\lens{pk}{p t} p k t\to
	\lens{pk}{p t} \lens{p k}{p t} p t t\To{\mu}
	\lens{pk}{p t} \lens{p k}{p t} p t.
\end{align*}
\label{page.this}
We will prove coassociativity by a massive diagram chase---see \cref{fig.assoc}, page \pageref{fig.assoc}---and leave counitality to the reader.
\begin{figure}
\[
\scalebox{.6}{
\begin{tikzcd}[column sep=7pt, row sep=9pt, cramped, ampersand replacement=\&, every label/.append style={font=\fontsize{3}{3}\selectfont}]
	\lc{pk}{pt}\ar[r]\ar[d]\&\lc{pkk}{pt}\ar[rrr]\ar[d]\&\&\&\lc{\lc{pk}{pt}ptk}{pt}\ar[r]\ar[d]\&\lc{\lc{pk}{pt}pkt}{pt}\ar[r]\ar[d]\&\lc{\lc{pk}{pt}\lc{pk}{pt}ptt}{pt}\ar[r]\ar[d]\&\lc{\lc{pk}{pt}\lc{pk}{pt}pt}{pt}\ar[r]\ar[d]\&\lc{pk}{pt}\lc{pk}{pt}\ar[d]
	\\
	\lc{pkk}{pt}\ar[r]\ar[d]\&\lc{pkkk}{pt}\ar[rrr]\ar[d]\&\&\&\lc{\lc{pkk}{pt}ptk}{pt}\ar[r]\ar[d]\&\lc{\lc{pkk}{pt}pkt}{pt}\ar[r]\ar[d]\&\lc{\lc{pkk}{pt}\lc{pk}{pt}ptt}{pt}\ar[r]\ar[d]\&\lc{\lc{pkk}{pt}\lc{pk}{pt}pt}{pt}\ar[r]\ar[d]\&\lc{pkk}{pt}\lc{pk}{pt}\ar[d]
	\\
	\lc{\lc{pk}{pt}ptk}{pt}\ar[r]\ar[dd]\&\lc{\lc{pk}{pt}ptkk}{pt}\ar[rrr]\ar[d]\&\&\&\lc{\lc{\lc{pk}{pt}ptk}{pt}ptk}{pt}\ar[r]\ar[d]\&\lc{\lc{\lc{pk}{pt}ptk}{pt}pkt}{pt}\ar[r]\ar[d]\&\lc{\lc{\lc{pk}{pt}ptk}{pt}\lc{pk}{pt}ptt}{pt}\ar[r]\ar[d]\&\lc{\lc{\lc{pk}{pt}ptk}{pt}\lc{pk}{pt}pt}{pt}\ar[r]\ar[d]\&\lc{\lc{pk}{pt}ptk}{pt}\lc{pk}{pt}\ar[d]
	\\
	\ar[r, phantom, "\text{\tiny Dist}"]\&\lc{\lc{pk}{pt}pktc}{pt}\ar[d]\ar[rd]\ar[rrr]\&~\&\&\lc{\lc{\lc{pk}{pt}pkt}{pt}ptk}{pt}\ar[r]\ar[d]\&\lc{\lc{\lc{pk}{pt}pkt}{pt}pkt}{pt}\ar[r]\ar[d]\&\lc{\lc{\lc{pk}{pt}pkt}{pt}\lc{pk}{pt}ptt}{pt}\ar[r]\ar[d]\&\lc{\lc{\lc{pk}{pt}pkt}{pt}\lc{pk}{pt}pt}{pt}\ar[r]\ar[d]\&\lc{\lc{pk}{pt}pkt}{pt}\lc{pk}{pt}\ar[d]
	\\
	\lc{\lc{pk}{pt}pkt}{pt}\ar[r]\ar[d]\&\lc{\lc{pk}{pt}pkkt}{pt}\ar[rd]\ar[d]\&\lc{\lc{pk}{pt}\lc{pk}{pt}pttk}{pt}\ar[rr]\ar[d]\ar[ddrr]\&\&\lc{\lc{\lc{pk}{pt}\lc{pk}{pt}ptt}{pt}ptk}{pt}\ar[r]\ar[d]\&\lc{\lc{\lc{pk}{pt}\lc{pk}{pt}ptt}{pt}pkt}{pt}\ar[r]\ar[d]\&\lc{\lc{\lc{pk}{pt}\lc{pk}{pt}ptt}{pt}\lc{pk}{pt}ptt}{pt}\ar[r]\ar[d]\&\lc{\lc{\lc{pk}{pt}\lc{pk}{pt}ptt}{pt}\lc{pk}{pt}pt}{pt}\ar[r]\ar[d]\&\lc{\lc{pk}{pt}\lc{pk}{pt}ptt}{pt}\lc{pk}{pt}\ar[d]
	\\
	\lc{\lc{pk}{pt}\lc{pk}{pt}ptt}{pt}\ar[r]\ar[dddd]\&\lc{\lc{pk}{pt}\lc{pkk}{pt}ptt}{pt}\ar[rddd]\ar[dddd]\&\lc{\lc{pk}{pt}\lc{pk}{pt}ptkt}{pt}\ar[rrdd]\ar[ddd]\ar[rrd, phantom, "\text{\tiny Dist}"]\&\&\lc{\lc{\lc{pk}{pt}\lc{pk}{pt}pt}{pt}ptk}{pt}\ar[r]\ar[d]\&\lc{\lc{\lc{pk}{pt}\lc{pk}{pt}pt}{pt}pkt}{pt}\ar[r]\ar[d]\&\lc{\lc{\lc{pk}{pt}\lc{pk}{pt}pt}{pt}\lc{pk}{pt}ptt}{pt}\ar[r]\ar[d]\&\lc{\lc{\lc{pk}{pt}\lc{pk}{pt}pt}{pt}\lc{pk}{pt}pt}{pt}\ar[r]\ar[dddd]\&\lc{\lc{pk}{pt}\lc{pk}{pt}pt}{pt}\lc{pk}{pt}\ar[dddd]
	\\[10pt]
	\&\&~\&\&\lc{\lc{pk}{pt}\lc{pk}{pt}ptk}{pt}\ar[r]\&\lc{\lc{pk}{pt}\lc{pk}{pt}pkt}{pt}\ar[r]\&\lc{\lc{pk}{pt}\lc{pk}{pt}\lc{pk}{pt}ptt}{pt}\ar[dddr]\ar[dd]
	\\
 \&\&\&\&\lc{\lc{pk}{pt}\lc{pk}{pt}pktt}{pt}\ar[r]\ar[ru]\ar[ld]\&\lc{\lc{pk}{pt}\lc{pk}{pt}\lc{pk}{pt}pttt}{pt}\ar[ru]\ar[ld]\&
	\\
	\&\&\lc{\lc{pk}{pt}\lc{\lc{pk}{pt}ptk}{pt}ptt}{pt}\ar[r]\ar[d]\&\lc{\lc{pk}{pt}\lc{\lc{pk}{pt}pkt}{pt}ptt}{pt}\ar[r]\ar[d]\&\lc{\lc{pk}{pt}\lc{\lc{pk}{pt}\lc{pk}{pt}ptt}{pt}ptt}{pt}\ar[rr]\ar[d]\&\&\lc{\lc{pk}{pt}\lc{\lc{pk}{pt}\lc{pk}{pt}pt}{pt}ptt}{pt}\ar[d]
	\\
	\lc{\lc{pk}{pt}\lc{pk}{pt}pt}{pt}\ar[r]\ar[d]\&\lc{\lc{pk}{pt}\lc{pkk}{pt}pt}{pt}\ar[r]\ar[d]\&\lc{\lc{pk}{pt}\lc{\lc{pk}{pt}ptk}{pt}pt}{pt}\ar[r]\ar[d]\&\lc{\lc{pk}{pt}\lc{\lc{pk}{pt}pkt}{pt}pt}{pt}\ar[r]\ar[d]\&\lc{\lc{pk}{pt}\lc{\lc{pk}{pt}\lc{pk}{pt}ptt}{pt}pt}{pt}\ar[rr]\ar[d]\&\&\lc{\lc{pk}{pt}\lc{\lc{pk}{pt}\lc{pk}{pt}pt}{pt}pt}{pt}\ar[r]\ar[d]\&\lc{\lc{pk}{pt}\lc{pk}{pt}\lc{pk}{pt}pt}{pt}\ar[r]\ar[d]\&\lc{pk}{pt}\lc{pk}{pt}\lc{pk}{pt}\ar[dl, equal]
	\\
	\lc{pk}{pt}\lc{pk}{pt}\ar[r]\&\lc{pk}{pt}\lc{pkk}{pt}\ar[r]\&\lc{pk}{pt}\lc{\lc{pk}{pt}ptk}{pt}\ar[r]\&\lc{pk}{pt}\lc{\lc{pk}{pt}pkt}{pt}\ar[r]\&\lc{pk}{pt}\lc{\lc{pk}{pt}\lc{pk}{pt}ptt}{pt}\ar[rr]\&\&\lc{pk}{pt}\lc{\lc{pk}{pt}\lc{pk}{pt}pt}{pt}\ar[r]\&\lc{pk}{pt}\lc{pk}{pt}\lc{pk}{pt}
\end{tikzcd}
}
\]
\caption{Proof of coassociativity for the comonad $\lens{p\circ k}{p\circ t}$ associated to any accessible $p\colon\cat{C}\to\cat{D}$, cocomplete $\cat{D}$, and (co)monad $k,t\colon\cat{C}\to\cat{C}$. See \cref{thm.monad_comonad_dist}.}
\label{fig.assoc}
\end{figure}
\end{proof}

\chapter{Categories by Kan extension}

As mentioned in the introduction (\cref{chap.intro}), polynomial comonads can be identified with small categories.
\begin{proposition}[ \cite{ahman2016directed}]\label{prop.au}
Categories, up to isomorphism, can be identified with polynomial comonads on $\smset$.
\end{proposition}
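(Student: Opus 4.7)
The plan is to exhibit a bijection between isomorphism classes of small categories and isomorphism classes of polynomial comonads on $\smset$, and to check that comonad morphisms match functors between the corresponding categories. In one direction, send a small category $\cat{C}$ to its \emph{outfacing polynomial}
\[
  p_\cat{C} \coloneqq \sum_{c:\ob\cat{C}} \yon^{\cat{C}[c]},
\]
as in \cref{ex.recover_site}. Equip $p_\cat{C}$ with the counit $p_\cat{C}\imp\yon$ that, at $X:\smset$, sends $(c,\varphi\colon\cat{C}[c]\to X)$ to $\varphi(\id_c)$, and with the comultiplication $p_\cat{C}\imp p_\cat{C}\circ p_\cat{C}$ encoding composition in $\cat{C}$: a pair $(c,\varphi)$ is sent to $(c,\psi)$ where $\psi$ takes $f\colon c\to c'$ in $\cat{C}[c]$ to the pair $(c',\;g\mapsto\varphi(g\circ f))$. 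Unitality and associativity of composition in $\cat{C}$ translate directly into the counit and coassociativity axioms.

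For the inverse direction, I would use the classification of polynomial functors and their natural transformations: every polynomial functor on $\smset$ is isomorphic to $\sum_{i:I}\yon^{A_i}$ for an essentially unique indexed family $A\colon I\to\smset$, and natural transformations $\sum_{i:I}\yon^{A_i}\imp\sum_{j:J}\yon^{B_j}$ correspond bijectively to a function $\varphi\colon I\to J$ together with a family of backward maps $B_{\varphi(i)}\to A_i$. Decoding a counit $p\imp\yon$ this way yields a chosen element $e_i:A_i$ for each $i:I$. Decoding a comultiplication $p\imp p\circ p$ requires an explicit formula for $p\circ p$; the standard distributivity calculation gives
\[
  (p\circ p)(X)\cong\sum_{i:I}\sum_{\gamma\colon A_i\to I}\prod_{a:A_i}X^{A_{\gamma(a)}},
\]
so a comultiplication supplies, for each $i:I$, an assignment $\gamma_i\colon A_i\to I$ together with, for each $a:A_i$, a function $A_{\gamma_i(a)}\to A_i$, which we read as codomain-of-arrow and composition-with-arrow, respectively.

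Finally, set $\ob\cat{C}\coloneqq I$, $\cat{C}[c]\coloneqq A_c$, declare the codomain of $a:A_c$ to be $\gamma_c(a)$, use the extracted composition data for composition in $\cat{C}$, and take $e_c:A_c$ as the identity at $c$. The two comonad laws then translate exactly into the category axioms: counitality forces $e_c$ to be a two-sided identity for the resulting composition, and coassociativity forces composition to be associative. The main obstacle---really just careful bookkeeping in the polynomial bicategory---is unpacking the $p\circ p$ formula and keeping track of which index variable plays which role, so that the comonad equations become readable as the usual unitality/associativity diagrams. Once that translation is in hand, the two constructions are manifestly mutually inverse up to canonical isomorphism, and the same decoding of polynomial natural transformations into a function on indices together with backward maps on fibers yields the bijection between comonad morphisms and functors, completing the identification.
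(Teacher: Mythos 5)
Your object-level argument is essentially the standard Ahman--Uustalu proof, which is also the only route available: the paper does not prove this proposition at all (it is imported from \cite{ahman2016directed}); it only spells out the forward direction, defining $\catcmd{C}=\sum_{c:\ob\cat{C}}\yon^{\cat{C}[c]}$ with the counit $(c,x)\mapsto x(\id_c)$ and the comultiplication \eqref{eqn.delta}, exactly as you do. Your inverse direction---decoding a counit and comultiplication via the classification of maps of polynomials into an identity-selection, a codomain assignment $\gamma_i\colon A_i\to I$, and backward composition maps $A_{\gamma_i(a)}\to A_i$---is sound, though note that the counit laws are also needed to force the position part of $\delta$ to have the form $i\mapsto(i,\gamma_i)$ in the first place, and coassociativity additionally yields $\cod(f\then f')=\cod(f')$; these are part of the ``bookkeeping'' you defer but are worth naming.

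There is, however, one genuine error: your claim that the decoding of polynomial natural transformations ``yields the bijection between comonad morphisms and functors.'' It does not. A comonad morphism between polynomial comonads consists of a forward map on objects together with \emph{backward} maps on the fibers of outgoing morphisms, and compatibility with $\epsilon$ and $\delta$ makes this a \emph{retrofunctor}, not a functor; the paper warns about exactly this in \cref{rem.retro}. Since the proposition only asserts an identification of categories with polynomial comonads up to isomorphism (an object-level statement), this mistake does not invalidate your proof of the statement as given, but the morphism-level claim should be deleted or corrected.
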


We now briefly explain how this works. For any category $\cat{C}$, the associated polynomial comonad is carried by the functor
\begin{equation}\label{eqn.cat_comonad_carrier}
\catcmd{C}\coloneqq\sum_{c:\ob\cat{C}}\yon^{\cat{C}[c]}\colon\smset\to\smset,
\end{equation}
where $\cat{C}[c]$ is the set of outgoing maps from $c$ in $\cat{C}$; see \cref{not.main}. We will see that the counit $\epsilon\colon\catcmd{C}\to\yon$ encodes the identity morphisms and the comultiplication $\delta\colon\catcmd{C}\to\catcmd{C}\circ\catcmd{C}$ encodes the codomains and the compositions.

We have seen this polynomial before: it is $\catcmd{C}\cong\lens{\elts[\cat{C}]}{\elts[\cat{C}]}$ from \cref{ex.recover_site}.%
\footnote{We also obtained this polynomial from the monad-on-set-spans version of $\cat{C}$ in \cref{ex.monad_in_span}.
}
We thus formally obtain the above-mentioned maps $\epsilon\colon\catcmd{C}\to\id_\smset$ and $\delta\colon\catcmd{C}\to\catcmd{C}\circ\catcmd{C}$ from \cref{ex.density_comonad}. Explicitly, 
$
\epsilon_X\colon\sum_{c:\ob\cat{C}}\smset(\cat{C}[c],X)\to X
$
is given by $(c,x)\mapsto x(\id_c)$, and
\begin{equation}\label{eqn.delta}
	\delta_C\colon\sum_{c:\ob\cat{C}}\smset(\cat{C}[c],X)\to\sum_{c:\ob\cat{C}}\smset\left(\cat{C}[c],\sum_{c:\ob\cat{C}}\smset(\cat{C}[c],X)\right)
\end{equation}
is given by $(c,x)\mapsto (c, f\mapsto(\cod(f), f'\mapsto x(f\then f')))$.%
\footnote{In case the typing isn't clear, $c:\ob\cat{C}$, $x:\cat{C}[c]\to X$, $f\colon\cat{C}[c]$, and $f'\colon\cat{C}[\cod(f)]$.
}

So every small category $\cat{C}$ can be obtained in this way by the left Kan extension of a pra-functor (from a copresheaf category to $\smset$) along itself. 

\begin{remark}\label{rem.retro}
Note that comonad morphisms between polynomial comonads are not functors, but \emph{retrofunctors}. All of the constructions (e.g.\ \cref{prop.selection,prop.prod_completion,prop.lawvere}) in this section are functorial as maps into the category of categories and retrofunctors, but retrofunctors are beyond the scope of this paper.
\end{remark}

\begin{remark}\label{rem.pra_bico}
Garner showed that polynomial bicomodules $\catcmd{C}\circ p\from p\to p\circ \catcmd{C'}$ can be identified with pra-functors $\cat{C'}\set\to\cat{C}\set$; see \href{https://www.youtube.com/watch?v=tW6HYnqn6eI}{his HoTTEST video} or \cite{spivak2025functorial}. While many of our results extend to this setting (see e.g.\ \cref{rem.bicomodules}), in order to keep this work as self-contained as possible, we do not discuss it further here.
\end{remark}

\begin{proposition}\label{prop.selection}
Let $p\colon\smset\to\smset$ be a polynomial functor. Then we have a function
\[
\lens{p\circ-}{p}\colon\ob\smcat\to\ob\smcat.
\]
\end{proposition}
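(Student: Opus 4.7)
The plan is to assemble the statement from three ingredients already established in the paper: the identification of small categories with polynomial comonads on $\smset$ (\cref{prop.au}), the fact that $\lens{p\circ-}{p}$ sends comonads to comonads (\cref{cor.cmd}), and the fact that left Kan extensions of pra-functors along pra-functors are again pra-functors (\cref{prop.pra_simple} / \cref{rem.pra_general}). The only thing to verify is that these three pieces fit together so as to preserve the polynomial property, not just the comonad property.

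\textbf{Step 1: convert the input.} Start with a small category $\cat{C}$. By \cref{prop.au}, $\cat{C}$ corresponds to the polynomial comonad $\catcmd{C}\coloneqq\sum_{c:\ob\cat{C}}\yon^{\cat{C}[c]}$ on $\smset$. In particular, $\catcmd{C}$ is a pra-functor $\smset\to\smset$ (the copresheaf category on the terminal category is $\smset$, and polynomial functors on $\smset$ are exactly pra-functors by the discussion following \cref{lemma.pra_accessible}). The given functor $p$ is likewise a pra-functor.

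\textbf{Step 2: form the comonad.} Since $p$ is polynomial, it is accessible (\cref{lemma.pra_accessible}), and since $\smset$ is cocomplete and accessible, \cref{cor.cmd} applies with $\cat{C}=\cat{D}=\smset$. Thus $\lens{p\circ\catcmd{C}}{p}$ inherits the structure of an accessible comonad on $\smset$ from the comonad structure on $\catcmd{C}$.

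\textbf{Step 3: check that the carrier is polynomial.} Since $p$ and $\catcmd{C}$ are both pra-functors, so is their composite $p\circ\catcmd{C}\colon\smset\to\smset$ (pra-functors are closed under composition, which follows from \cref{lemma.pra}(b) since right adjoints and left adjoints each compose). Now apply \cref{prop.pra_simple} to the left Kan extension of the pra-functor $p\circ\catcmd{C}$ along the accessible functor $p$: the result $\lens{p\circ\catcmd{C}}{p}$ is again a pra-functor $\smset\to\smset$, i.e.\ a polynomial functor. Combined with Step 2, this polynomial functor carries a comonad structure, and so by \cref{prop.au} it corresponds to a small category.

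\textbf{Main obstacle.} There is essentially no obstacle beyond correctly chaining the cited results; the only subtlety is confirming that the comonad structure on $\lens{p\circ\catcmd{C}}{p}$ produced by \cref{cor.cmd} lives on a polynomial carrier, which is handled by \cref{prop.pra_simple} in the form $\lens{-}{p}$ sends pra-functors to pra-functors (for $p$ itself polynomial, hence pra). Since the proposition only asserts a function on objects---not functoriality---no further coherence needs to be checked.
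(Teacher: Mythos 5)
Your proof is correct and follows exactly the paper's own route: the paper's proof is the one-line citation of \cref{cor.cmd}, \cref{prop.au}, and \cref{prop.pra_simple}, which are precisely the three ingredients you chain together. Your elaboration of how they fit (in particular, that the polynomiality of the carrier comes from \cref{prop.pra_simple}) is a faithful unpacking of that argument.
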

\begin{proof}
This follows from \cref{cor.cmd,prop.au,prop.pra_simple}.
\end{proof}

We will see that the function in \cref{prop.selection} extends to a functor $\lens{p\circ-}{p}\colon\smcat\to\smcat$, in fact one that respects the bijective-on-objects, fully-faithful factorization system; see \cref{cor.sel_functorial,prop.selection_boff}.%
\footnote{
This map also extends to a map on bicomodules; see \cref{rem.pra_bico,rem.bicomodules}.
}
We will refer to categories of the form $\lens{p\circ\catcmd{C}}{p}$ as \emph{selection categories}.

Given a small universe $\Cat{U}$ of sets, where the set associated to any $A:\Cat{U}$ is denoted $[A]$, define the polynomial
\[\uu\coloneqq\sum_{A:\Cat{U}}\yon^{[A]}.\]
We will take this $\uu$ as notation going forward. Sometimes we write $A$ rather than $[A]$.

\begin{proposition}\label{prop.prod_completion}
The category $\lens{\uu\circ\catcmd{C}}{\uu}$ is the $\Cat{U}$-ary product completion of $\cat{C}$.
\end{proposition}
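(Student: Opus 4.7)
The plan is to compute $\lens{\uu\circ\catcmd{C}}{\uu}$ explicitly as a polynomial, match its positions (resp.\ directions) with the objects (resp.\ morphisms) of the free $\Cat{U}$-ary product completion of $\cat{C}$, and then verify that the comonad structure supplied by \cref{cor.cmd} realizes the identity and composition rules of that completion. This generalizes the $\Cat{Fin}$-case mentioned in \cref{chap.intro}.

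First, using standard polynomial-composition distributivity, compute
\[
(\uu \circ \catcmd{C})(X) = \sum_{A:\Cat{U}}\Bigl(\sum_{c:\ob\cat{C}} X^{\cat{C}[c]}\Bigr)^{[A]} \cong \sum_{A:\Cat{U},\, f:[A]\to\ob\cat{C}} X^{\sum_{a:[A]} \cat{C}[f(a)]}.
\]
This exhibits $\uu \circ \catcmd{C}$ in the pra-form of \cref{lemma.pra}, so \cref{prop.pra_simple} yields
\[
\lens{\uu \circ \catcmd{C}}{\uu} \cong \sum_{A:\Cat{U},\, f:[A]\to\ob\cat{C}} \yon^{\uu(\sum_{a:[A]} \cat{C}[f(a)])}.
\]
A direction at position $(A, f)$ unpacks to a tuple $(B, \phi, (h_b, \psi_b)_{b:[B]})$ where $B:\Cat{U}$, $\phi:[B]\to[A]$, $h_b \in \ob\cat{C}$, and $\psi_b: f(\phi(b)) \to h_b$ is a morphism in $\cat{C}$.

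Next, recall that the free $\Cat{U}$-ary product completion of $\cat{C}$ (dual to the free coproduct completion of $\cat{C}\op$) has objects pairs $(A, f:[A]\to\ob\cat{C})$, thought of as $\prod_a f(a)$, and morphisms $(A, f) \to (B, h)$ given by pairs $(\phi:[B]\to[A], \psi = (\psi_b: f(\phi(b)) \to h(b))_{b:[B]})$. So the positions of $\lens{\uu \circ \catcmd{C}}{\uu}$ match the objects on the nose, and---reading off the codomain $(B, h: b \mapsto h_b)$ from each direction $(B, \phi, (h_b, \psi_b))$---the directions at $(A, f)$ match morphisms out of $(A, f)$ in the product completion, respecting codomains.

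Finally, verify that the comonad structure of \cref{cor.cmd} encodes identities and composition under the correspondence of \cref{prop.au}. The counit $\lens{\uu\circ\catcmd{C}}{\uu}\to\id_\smset$ is built from the colax counit $\lens{\uu}{\uu}\to\id_\smset$ together with functoriality applied to $\catcmd{C}$'s counit $\catcmd{C}\to\yon$ (which at each object selects the identity morphism of $\cat{C}$); unwinding gives the direction $(A, \id_{[A]}, (f(a), \id_{f(a)})_{a:[A]})$ at each $(A, f)$, i.e.\ the identity in the product completion. The comultiplication is assembled from $\catcmd{C}$'s comultiplication (which by \eqref{eqn.delta} encodes composition in $\cat{C}$) together with the colax comultiplication of \cref{thm.colax} and the interchange for $\uu$; chasing yields exactly the composition rule of the product completion, namely $(\phi,\psi)\then(\phi',\psi') = (\phi'\then\phi,\, c\mapsto\psi_{\phi'(c)}\then\psi'_c)$. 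The main obstacle is this final chase: it requires care in interchanging the outer $\uu$ with various applications of $\catcmd{C}$'s structure maps, but is essentially routine after the bijection between directions and morphisms is in place, closely paralleling the verification in \cref{ex.recover_site} that $\lens{\elts[\cat{C}]}{\elts[\cat{C}]}$ recovers $\cat{C}$.
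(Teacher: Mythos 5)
Your proposal is correct and follows essentially the same route as the paper: compute the carrier of $\lens{\uu\circ\catcmd{C}}{\uu}$ via \cref{prop.pra_simple}, identify positions with pairs $(A, f\colon[A]\to\ob\cat{C})$ and directions with $\Cat{U}$-indexed families of outgoing $\cat{C}$-morphisms, and match these with the objects and morphisms of the $\Cat{U}$-ary product completion. The paper's proof stops there and leaves the identification of the (co)unit and composition to the reader, whereas you carry out that check explicitly; the extra detail is welcome but not a different argument.
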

\begin{proof}
Let $\cat{C}$ be a small category. An object of $\lens{\uu\circ\catcmd{C}}{\uu}$ is a set $A:\Cat{U}$ and $A$-many elements of $\ob\cat{C}$; denote it $(A,c)$, where $c\colon A\to\ob\cat{C}$. A morphism out of $(A,c)$ consists of a set $A':\Cat{U}$ and $A'$-many elements of $\sum_{a:A}\cat{C}[c_a]$. These are the objects and outgoing morphisms of the $\Cat{U}$-ary product completion, and we leave the rest to the reader.
\end{proof}

\begin{example}
The polynomial associated to the universe of finite sets is $\List\coloneqq\sum_{N:\nn}\yon^N$, and for any category $\cat{C}$, the comonad $\lens{\List\circ\catcmd{C}}{\List}$ can be identified with the finite product completion of $\cat{C}$. Adding a free terminal object to $\cat{C}$ is given by $\lens{(\yon+1)\circ\catcmd{C}}{\yon+1}=\lens{\catcmd{C}+1}{\yon+1}$.
\end{example}

Selection categories---i.e.\ those of the form $\lens{p\circ\catcmd{C}}{p}$ as in \cref{prop.selection}---appear not to have been studied in generality before, and the construction is a main subject of the remainder of this paper; we will return to them in \cref{ex.internal_cats} after discussing some examples of other results from \cref{chap.density_comonad}.

\begin{proposition}\label{prop.lawvere}
Let $p\colon\cat{C}\to\smset$ be a pra-functor. Then we have a function%
\footnote{For additional functoriality of $\lens{p}{p\circ -}$, see \cref{rem.retro}. See also \cref{rem.bicomodules}.}
\[
  \lens{p}{p\circ -}\colon\ob\mnd(\cat{C})\to\ob\smcat.
\]
\end{proposition}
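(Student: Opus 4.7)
The plan is to assemble the claim from three results established earlier in the paper. First, I would invoke \cref{cor.cmd_along} to obtain that $\lens{p}{p\circ t}$ carries a comonad structure on $\smset$ for each accessible monad $t:\mnd(\cat{C})$; its hypotheses are met because $\cat{C}$ is a copresheaf category (hence accessible by \cref{lemma.pra_accessible}), $p$ is accessible since pra-functors are (again by \cref{lemma.pra_accessible}), and $\smset$ is cocomplete.

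Next, I would check that the carrier of this comonad is a polynomial endofunctor on $\smset$. Since both $p$ and $t$ are accessible, the composite $p\circ t\colon\cat{C}\to\smset$ is accessible, placing us in the scope of \cref{prop.pra_simple} with $q\coloneqq p\circ t$ and the given pra-functor $p$. Writing $p\cong\sum_{i:I}\cat{C}(A_i,-)$, that result yields
\[
\lens{p}{p\circ t}\cong\sum_{i:I}\yon^{p(t(A_i))},
\]
which is manifestly a coproduct of representables, hence polynomial.

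Finally, I would apply \cref{prop.au} to identify this polynomial comonad on $\smset$ with a small category up to isomorphism. The composite assignment $t\mapsto\lens{p}{p\circ t}$ thus produces an object of $\smcat$, giving the required function $\ob\mnd(\cat{C})\to\ob\smcat$. I do not foresee any substantial obstacle: the entire content of the argument is verifying that the hypotheses of the three cited results line up, which they do essentially by the definition of pra-functor together with \cref{lemma.pra_accessible}.
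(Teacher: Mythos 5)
Your proof is correct and follows exactly the paper's own route: the paper's proof is the one-line citation of \cref{cor.cmd_along,prop.au,prop.pra_simple}, and you have simply spelled out how the hypotheses of those three results are verified (accessibility via \cref{lemma.pra_accessible}, polynomiality of the carrier via \eqref{eqn.kan_pra_to_set}). No issues.
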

\begin{proof}
This follows from \cref{cor.cmd_along,prop.au,prop.pra_simple}.
\end{proof}

\begin{example}\label{ex.lawvere}
To every $S$-sorted $\Cat{U}$-ary Lawvere theory $\cat{T}$ is an associated monad $t$ on $S\set$; consider the associated Kleisli category $S\set_t$. Then $\cat{T}$ can be identified with the opposite of the full subcategory of $S\set_t$ spanned by $\Cat{U}$-small $S$-sets. This is a bit of a mouthful, but it arises as a left Kan extension:
\begin{equation}\label{eqn.lawvere}
\cat{T}\cong\lens{\uu\,\circ\,\elts[S]}{\uu\,\circ\,\elts[S]\,\circ\, t}.
\end{equation}
One can check by reading off the objects and morphisms of the corresponding category using \eqref{eqn.kan_pra_to_set,eqn.cat_comonad_carrier}. For example, if $t$ is finitary and $S=1$, then $\cat{T}\cong\lens{\List}{\List\circ t}$. 
\end{example}

\begin{example}\label{ex.walking}
For any monad $t\colon\smset\to\smset$, set $I$, and sets $A\colon I\to\smset$, the full subcategory of the Kleisli category $\smset_t\op$ spanned by $(A_i)_{i:I}$ can be identified with the comonad $\lens{p}{p\circ t}$, where $p\coloneqq\sum_{i:I}\yon^{A_i}$. For example, the walking arrow is
\[
  \fbox{$\bullet\to\bullet$}\cong\lens{\yon^1+\yon^0}{\yon^1+\yon^0}
  \qedhere
\]
\end{example}

A variant of the following appeared in \cite[Example 3.9]{shapiro2024polynomial}, where  theory categories $\Theta_m$ for monads $m$ associated to various higher categories, together with the associated nerve functors, were discussed.

\begin{proposition}\label{prop.deltaop}
The polynomial comonad associated by \cref{cor.cmd_along} to the left Kan extension
\[
\begin{tikzcd}[row sep=35pt]
  \cat{G}\set\ar[r, "\paths", ""' name=mapp]\ar[d, "\frcat"']&\smset\ar[from=d, "\lens{\paths}{\paths\,\circ\, \frcat}"']\\
  \cat{G}\set\ar[r, "\paths"', "" name=mapp']&\smset
  \ar[from=mapp, to=mapp', Rightarrow, shorten=6pt]
\end{tikzcd}
\]
is isomorphic the simplicial indexing category
\[\lens{\paths}{\paths\,\circ\, \frcat}\cong\simp\op.\]
\end{proposition}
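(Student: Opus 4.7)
The plan is to reduce the claim to a direct computation that exhibits the carrier of $\lens{\paths}{\paths\circ\frcat}$ as $\catcmd{\simp\op}$, then verify that the induced comonad structure matches the one $\simp\op$ carries as per \cref{prop.au}. The carrier computation is essentially done in \cref{ex.simplicial}: since $\paths\cong\sum_{n:\nn}\cat{G}\set(\ul{n},-)$ is a pra-functor with ``arities'' $\ul{n}$, \cref{prop.pra_simple} gives
\[
  \lens{\paths}{\paths\circ\frcat}\cong\sum_{n:\nn}\yon^{\paths(\frcat(\ul{n}))}.
\]
The first step is therefore to identify $\paths(\frcat(\ul{n}))$ with $\simp\op[n]$. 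The graph $\frcat(\ul{n})$ is the underlying graph of the linear order category $[n]$, whose edges are exactly the pairs $i\le j$ in $\{0,\dots,n\}$. A length-$m$ path in this graph is a sequence $i_0\le i_1\le\cdots\le i_m$ in $[n]$, i.e.\ a morphism $[m]\to[n]$ in $\simp$. Summing over $m$ yields $\paths(\frcat(\ul{n}))\cong\sum_{m:\nn}\simp([m],[n])=\simp\op[n]$, matching the carrier \eqref{eqn.cat_comonad_carrier} of $\simp\op$.

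Next I would verify that the comonad structure supplied by \cref{cor.cmd_along} (the specialization of \cref{thm.monad_comonad_dist} with $k=\id$) matches the comonad structure that $\simp\op$ carries by \cref{prop.au}. For the counit, the construction in \cref{thm.monad_comonad_dist} produces $\epsilon\colon\lens{\paths}{\paths\circ\frcat}\to\id$ from the natural transformation $\paths\circ\id\to\paths\circ\frcat$ given by applying $\paths$ to the monad unit $\eta\colon\id_{\cat{G}\set}\to\frcat$. At $\ul{n}$, the graph map $\eta_{\ul{n}}\colon\ul{n}\to\frcat(\ul{n})$ sends the $k$th edge to the elementary edge $k-1\to k$ in $[n]$; this is the path $0\to 1\to\cdots\to n$ in $\frcat(\ul{n})$, which under our identification is precisely $\id_{[n]}\in\simp([n],[n])\subseteq\simp\op[n]$. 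Hence under the pra-functor correspondence the counit picks out exactly the identity morphism at each $n$, matching the counit of $\catcmd{\simp\op}$ from \eqref{eqn.delta}.

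For the comultiplication, I would trace through the diagram in the proof of \cref{thm.monad_comonad_dist}: with $k=\id$, it reduces to applying the coevaluation $\eta^\paths_{\paths\circ\frcat}$ twice and then the monad multiplication $\mu\colon\frcat\circ\frcat\to\frcat$. Under the pra-functor identification $\paths(\frcat(\ul{n}))\cong\simp\op[n]$, the action of $\mu$ on a composable pair of paths is exactly the composition of the corresponding order-preserving maps in $\simp$. Thus on an element $\alpha\colon[m]\to[n]$ of $\simp\op[n]$, the comultiplication returns $([m],\; \beta\mapsto\alpha\then\beta)$, which is precisely the comultiplication $\delta$ of $\catcmd{\simp\op}$ described by \eqref{eqn.delta}. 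Counitality and coassociativity then follow from the corresponding monad axioms for $\frcat$ together with those of the coevaluation/counit pair for the Kan extension.

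The main obstacle is the last paragraph: unwinding the chain of coevaluations and the monad multiplication in \cref{thm.monad_comonad_dist} and verifying that, after transport along the pra-functor identification, it agrees on the nose with the ``codomain + postcomposition'' description of $\delta$ for $\simp\op$. The difficulty is purely bookkeeping---matching how a path in $\frcat(\frcat(\ul{n}))$ collapses under $\mu$ with how two composable morphisms in $\simp$ compose---but it is the only step where the formal machinery of \cref{chap.density_comonad} must be traced through in full detail; every other step is either a direct quotation of earlier results or an explicit set-level bijection.
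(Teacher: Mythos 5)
Your proof is correct and follows the same route as the paper: compute the carrier via \cref{prop.pra_simple} (this is the content of \cref{ex.simplicial}) and then check that the comonad structure supplied by \cref{cor.cmd_along} agrees with that of $\catcmd{\simp\op}$. In fact you supply more detail than the paper does, since its proof simply cites \eqref{eqn.simp} for the carrier and leaves the agreement of the two comonad structures on $\sum_{n:\nn}\yon^{\simp\op[n]}$ to the reader; your explicit identification of $\paths(\frcat(\ul{n}))$ with $\simp\op[n]$ and your unwinding of the counit and comultiplication are precisely the omitted verifications.
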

\begin{proof}
We saw in \eqref{eqn.simp} that the carriers agree, and we may apply \cref{cor.cmd_along} because the free category construction $\frcat\colon\cat{G}\set\to\cat{G}\set$ carries an accessible (see \cref{lemma.pra_accessible}) monad structure. So we now have two comonad structures on $\catcmd{\simp\op}=\sum_{n:\nn}\yon^{\simp\op[n]}$ and we leave it to the reader to show that they agree. 
\end{proof}

\begin{example}
Every map $p\circ q\to q\circ p$ of polynomials induces a distributive law $\free_p\circ\cofree_q\to\cofree_q\circ\free_p$ of the free monad $\free_p$ over the cofree comonad $\cofree_q$ \cite{klin2011bialgebras}.

Note that every coalgebra $\alpha\colon S\to p(S)$ is an example: since $S=S\yon^0$ is a polynomial, $\alpha$ can be identified with a map $S\circ p\to p\circ S$. The cofree comonad $\cofree_p$ corresponds to the category with $p$-behavior trees (\cite{niu2024polynomial,jacobs2017introduction}) as objects and finite paths up the tree as morphisms; using $\alpha$, each $s:S$ corresponds to such a tree; in fact $\cofree_p(1)$ is the terminal $p$-coalgebra. The free monad $\free_S\cong\yon+S$ is the $S$-exceptions monad. 

The category $\lens{\cofree_p}{\free_S}$ defined in \cref{thm.monad_comonad_dist} again has $p$-behavior trees as objects, but a map is either a finite path up the tree or an exception $s:S$, which immediately lands in the behavior tree associated to $s$.
\end{example}

We now return to selection categories, i.e.\ those of the form $\lens{p\circ\catcmd{C}}{p}$ from \cref{prop.selection}, which will be our focus for the remainder of the paper.

\begin{example}\label{ex.internal_cats}
It was shown in \cite[Secion 5.1]{shapiro2023structures} that internal categories in $\cat{C}\set$ can be identified with pra-functor comonads on $\cat{C}\set$. Thus for any pra-functor $p\colon\cat{C}\set\to\smset$ and any internal category $k$ in $\cat{C}\set$, there is a category $\lens{p\circ k}{p}$.
\end{example}

\begin{lemma}\label{lemma.selection}
Let $\cat{C}$ be a category with associated comonad $\catcmd{C}$, and let $p\coloneqq\sum_{i:I}\yon^{A_i}$ be a polynomial functor. The following categories are isomorphic:
\begin{enumerate}[label=(\alph*)]
	\item the category associated to the comonad $\lens{p\circ\catcmd{C}}{p}$.
	\item the category $\cat{S}_\cat{C}(p)$ of $p$-ary formal products in $\cat{C}$. In other words, its object set is given by 
	\begin{equation}\label{eqn.ob_select}
		\ob\cat{S}_\cat{C}(p)\coloneqq\sum_{i:I}\prod_{a:A_i}\ob\cat{C},
	\end{equation}
	i.e.\ an object is a pair $(i,c)$, where $i:I$ and $c\colon A_i\to\ob\cat{C}$, and its hom-sets are given by
	\begin{equation}\label{eqn.hom_sets_selection}
	\Hom_{\cat{S}_\cat{C}(p)}\big((i,c),(i',c')\big)\coloneqq\prod_{a':A_{i'}}\sum_{a:A_i}\cat{C}\left(c_a,c'_{a'}\right)
	\end{equation}
	with the obvious identities and compositions.
\end{enumerate}
\end{lemma}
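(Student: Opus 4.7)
The plan is to first compute the carrier polynomial of the comonad $\lens{p\circ\catcmd{C}}{p}$ and read off from it the objects and outgoing morphism sets of the associated category (via \cref{prop.au}), showing they match those of $\cat{S}_\cat{C}(p)$; then to unwind the comonad structure inherited from \cref{thm.monad_comonad_dist} (specialized to $t=\id$, $k=\catcmd{C}$, with trivial distributive law) and verify that its counit and comultiplication encode the identity morphisms and composition as described by \eqref{eqn.hom_sets_selection}.

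For the carrier, I would first rewrite $p\circ\catcmd{C}$ as a pra-functor. Since $p=\sum_{i:I}\yon^{A_i}$ and $\catcmd{C}(X)=\sum_{c:\ob\cat{C}}X^{\cat{C}[c]}$, distributing product over sum gives
\[
	p(\catcmd{C}(X))\cong\sum_{i:I}\;\sum_{c\colon A_i\to\ob\cat{C}}X^{\sum_{a:A_i}\cat{C}[c_a]},
\]
so $p\circ\catcmd{C}\cong\sum_{(i,c)}\smset\bigl(\sum_{a:A_i}\cat{C}[c_a],-\bigr)$, where the indexing set agrees with \eqref{eqn.ob_select}. Applying \cref{prop.pra_simple} to Kan-extend along $p$ then yields
\[
	\lens{p\circ\catcmd{C}}{p}\;\cong\;\sum_{(i,c)}\yon^{p(\sum_{a:A_i}\cat{C}[c_a])}.
\]
Expanding $p(\sum_{a}\cat{C}[c_a])=\sum_{i':I}\prod_{a':A_{i'}}\sum_{a:A_i}\cat{C}[c_a]$ and then using $\cat{C}[c_a]=\sum_{c'':\ob\cat{C}}\cat{C}(c_a,c'')$ together with the product-of-dependent-sums to dependent-sum-of-products rearrangement (pulling the choice $c'\colon A_{i'}\to\ob\cat{C}$ outside the $\prod_{a'}$) shows
\[
	p\Bigl(\sum_{a:A_i}\cat{C}[c_a]\Bigr)\cong\sum_{(i',c')}\;\prod_{a':A_{i'}}\sum_{a:A_i}\cat{C}(c_a,c'_{a'}).
\]
Via \cref{prop.au}, this identifies the object set and, for each object, the set of outgoing morphisms together with their codomain assignments, matching $\cat{S}_\cat{C}(p)$ on the nose.

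It then remains to verify that the comonad structure matches the identities and compositions of $\cat{S}_\cat{C}(p)$. The counit $\lens{p\circ\catcmd{C}}{p}\to\yon$ factors through $\epsilon_{\catcmd{C}}\colon\catcmd{C}\to\yon$, whose explicit description (below \eqref{eqn.cat_comonad_carrier}) sends $(c,x)\mapsto x(\id_c)$; traced through the isomorphism, this exhibits the identity on $(i,c)$ as the morphism $a'\mapsto(a',\id_{c_{a'}})$, as required by $i'=i$, $c'=c$. For the comultiplication, I would specialize the formula on page~\pageref{page.this} to $t=\id$, substitute the explicit $\delta$ of $\catcmd{C}$ from \eqref{eqn.delta}, and transport along the carrier isomorphisms: given an outgoing morphism $(i',c',\,a'\mapsto(a_{a'},f_{a'}))$ from $(i,c)$ and an outgoing morphism $(i'',c'',\,a''\mapsto(a'_{a''},g_{a''}))$ from $(i',c')$, one computes the composite to be $(i'',c'',\,a''\mapsto(a_{a'_{a''}},\,f_{a'_{a''}}\then g_{a''}))$. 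This is exactly composition in $\cat{S}_\cat{C}(p)$.

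The main obstacle is purely the bookkeeping in this last step: one must trace a comultiplication assembled from \eqref{eqn.delta}, the coevaluation \eqref{eqn.coeval}, and the product-sum distributivity isomorphisms through several nested $\sum$-$\prod$ quantifiers without losing track of which $a'$-index selects which $a$ and which codomain $c'_{a'}$. No deep ideas are needed beyond the Yoneda-lemma manipulations already used in the carrier calculation, but the indexing must be handled carefully to see that the resulting operation is exactly ``componentwise composition after $a'$-reindexing.'' Once this is done, the associativity and unitality of composition in $\cat{S}_\cat{C}(p)$ are inherited from $\cat{C}$, and coassociativity and counitality of the comonad (\cref{thm.monad_comonad_dist}) give the isomorphism of categories.
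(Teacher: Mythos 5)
Your proposal is correct and follows essentially the same route as the paper: compute the carrier of $\lens{p\circ\catcmd{C}}{p}$ via \cref{prop.pra_simple} (i.e.\ \eqref{eqn.kan_pra_to_set}) and \eqref{eqn.cat_comonad_carrier}, rearrange the $\sum$--$\prod$ quantifiers to read off objects and the fibers over codomains, and use the comultiplication \eqref{eqn.delta} to identify codomains, identities, and composition. You are in fact more explicit than the paper, which leaves the identity and composite verification to the reader.
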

\begin{proof}
From \eqref{eqn.kan_pra_to_set} and \eqref{eqn.cat_comonad_carrier} we know that the object-set of $\lens{p\circ\catcmd{C}}{p}$ is given by $\sum_{i:I}\prod_{a:A_i}\ob\cat{C}$, as required, and that for any choice $(i,c)$ of object, the set of outgoing maps is given by
\[
	\lens{p\circ\catcmd{C}}{p}[(i,c)]\cong\sum_{i':I}\prod_{a':A_{i'}}\sum_{a:A_i}\cat{C}[c_i].
\]
From the definition \eqref{eqn.delta} of the comultiplication $\delta$, we also know that the codomain of $(i',a,f)$, is $(i', \cod(f))$,%
\footnote{
The typing is: $i':I$, $a:\prod_{a':A_{i'}}A_i$, and $f:\prod_{a':A_{i'}}\cat{C}[c_i]$.} 
so the fiber over $(i',c')$ is as in \eqref{eqn.hom_sets_selection}. We leave the identity and composite information to the reader.
\end{proof}

\begin{definition}[Selection category]
We say that a category $\cat{S}$ is a \emph{selection category} if there exists a polynomial $p$ and a category $\cat{C}$ such that $\cat{S}$ is isomorphic to either (hence both) of the categories described in \cref{lemma.selection}.
\end{definition}

Selection categories were discussed in a blog post titled \href{https://topos.institute/blog/2021-12-30-selection-categories/}{Creating new categories from old: Selection categories}, where a variety of results were announced, but not proven; we will prove some of them in \cref{chap.sel_theory}. We end this section with two examples.

\begin{example}\label{ex.selection}
Suppose $p\coloneqq\{S_1, S_2\}\yon^5+\{S_3\}\yon^4+\{S_4,S_5\}\yon^3$ is some arbitrary polynomial. Let $\cat{C}$ be some category. Then here is a picture of six composable morphisms in the selection category $\cat{S}_\cat{C}(p)=\lens{p\circ\catcmd{C}}{p}$ (left) and their composite (right):
\[
\begin{tikzcd}[row sep=1pt]
	S_1&S_1&S_4&S_2&S_3&S_2\\[-5pt]\hline\\
	\bul{f_{1}}&\bul{f_{1}}\ar[from=l, equal] &\bul{f_4}\ar[from=l]&
		\bul{n_3}\ar[from=dl]&\bul{f_5}\ar[from=dl, equal]&\bul{m_3}\ar[from=dl, equal]\\
	\bul{m_{1}}&\bul{f_{3}}\ar[from=ul]&\bul{n_2}\ar[from=dl, equal] &
		\bul{f_5}\ar[from=ul]&\bul{m_3}\ar[from=dl, equal]&\bul{f_6}\ar[from=ul]\\
	\bul{m_{2}}&\bul{n_{2}}\ar[from=dl]&\bul{m_2}\ar[from=dl, equal]&
		\bul{m_3}\ar[from=uul]&\bul{n_5}\ar[from=dl]&\bul{n_5}\ar[from=l, equal]\\
	\bul{n_{1}}&\bul{m_{2}}\ar[from=ul, equal]&                   &
		\bul{n_4}\ar[from=uul]&\bul{n_6}\ar[from=l]&\bul{n_6}\ar[from=l, equal]\\
	\bul{f_{2}}&\bul{n_{1}}\ar[from=ul, equal]&                   &
		\bul{m_2}\ar[from=uul, equal]&                &\bul{n_7}\ar[from=uul]
\end{tikzcd}
\hspace{1in}
\begin{tikzcd}[row sep=1pt]
	S_1&S_2\\[-5pt]\hline\\
	\bul{f_{1}}&\bul{m_3}\ar[from=l]\\
	\bul{m_{1}}&\bul{f_6}\ar[from=ul]\\
	\bul{m_{2}}&\bul{n_5}\ar[from=dl]\\
	\bul{n_{1}}&\bul{n_6}\ar[from=l]\\
	\bul{f_{2}}&\bul{n_7}\ar[from=ul]
\end{tikzcd}
\]

Thinking in terms of natural selection,%
\footnote{One could imagine each object $c:\ob\cat{C}$ as an organism with one of three sexes: male, female, or nonsexed. In $\cat{C}$, perhaps each morphism out of a male is the identity and each morphism out of a female has an associated male and its codomain is either male or female.}
at first there were 5 slots (niches), occupied by two females, two males, and a nonsexed organism. Between the first and second time-steps, one male and one female died, the other male and female survived and had a female offspring, and the nonsexed survived and reproduced. The first male survived until time-step 4, and so on.%

Selection categories can also be used to model states and memory locations in a computational process. A similar perspective---in the context of computational effects handling---was given in \cite[Section 5.2]{lynch2023concepts}.
\end{example}

\begin{example}
Using nonidentity monads, we could get even more interesting behavior using \cref{thm.monad_comonad_dist}. For example, the monad of finite random variables is
\[\mathit{rand}\coloneqq\sum_{N:\nn}\sum_{P:\Delta_N}\yon^N,\]
where $\Delta_N\coloneqq\{P:\{\mathsf{1},\ldots,\mathsf{N}\}\to[0,1]\mid P(\mathsf{1})+\cdots+P(\mathsf{N})=1\}$ is the distributions on an $N$-element set. 

Say that a category $\cat{C}$ is \emph{convex} if there is a distributive law $\mathit{rand}\circ\catcmd{C}\to\catcmd{C}\circ\mathit{rand}$; in the case $\cat{C}$ is a discrete, this is the same as the set $\ob\cat{C}$ having a convex structure in the usual sense. Given a convex category, the category $\lens{p\circ\catcmd{C}}{p\,\circ\,\mathit{rand}}$ is like that from \cref{ex.selection}, but rather than each slot picking a slot at the previous time step and a $\cat{C}$-map out of the object there, it picks a \emph{random} slot from the previous time step and a \emph{random} $\cat{C}$-map out of the object there.
\end{example}

\chapter{Basic theory of selection categories}\label{chap.sel_theory}

While selection categories appear not to have been studied before, from the perspective of this paper they are as natural as---and in some sense dual to---Lawvere theories: if $k$ is a polynomial comonad and $t$ is an arbitrary monad, one is $\lens{p\circ k}{p}$ and the other is $\lens{p}{p\circ t}$. We cannot justify their importance much beyond this---perhaps overly-facile---observation. The only other evidence that they are interesting theoretically is the fact that the construction is well-behaved as we show in \cref{thm.sel_enriched,prop.selection_boff}. We do not include the more general story from \cref{thm.monad_comonad_dist} here, leaving the right way to phrase that as an open question.

Let $\poly$ denote the category of polynomial functors $\smset\to\smset$ and natural transformations between them.

Let $\pprofcat$ denote the category of small categories enriched in the double category $\pprof$. Recall that a category enriched in $\pprof$ consists of a set $P$ of objects, for each $p:P$ a category $S(p)$, and for each pair of objects $p,q$, a profunctor $S(p)\tickar S(q)$, as well as identity and composite data. Each such is an object of $\pprofcat$, and a morphism between two such is a $\pprof$-enriched functor.

\begin{theorem}\label{thm.sel_enriched}
The selection category construction (\cref{lemma.selection}) comprises a functor
\[
	\cat{S}_-\colon\smcat\too\pprofcat.
\]
For each category $\cat{C}:\ob\smcat$, the associated $\pprof$-enriched category $\cat{S}_\cat{C}$ has objects $\ob\cat{S}_\cat{C}\coloneqq\ob\poly$. Assigned to $p:\poly$ is the selection category 
$
\cat{S}_\cat{C}(p)\cong\lens{p\circ\catcmd{C}}{p}
$
and assigned to each pair of objects $(p,q)$%
\footnote{Notation: $p\coloneqq\sum_{i:I}\yon^{A_i}$ and $q\coloneqq\sum_{j:J}\yon^{B_{j}}$, so $\ob\cat{S}_p\cong\sum_{i:I}\prod_{a:A_i}\ob\cat{C}$, and similarly for $\ob\cat{S}_{q}$.}
is the profunctor
\begin{align}\nonumber
  \Hom_{\cat{S}_\cat{C}}(p,q)\colon\cat{S}_\cat{C}(p)\op\times\cat{S}_\cat{C}(q)&\to\smset\\\label{eqn.prof_desc}
  \big((i,c)\;\;,\;\; (j,d)\big)&\mapsto\prod_{b:B_{j}}\sum_{a:A_i}\cat{C}\big(c(a),d(b)\big).
\end{align}
\end{theorem}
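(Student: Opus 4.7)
The plan is to construct the enriched category $\cat{S}_\cat{C}$ from the given formula by direct unpacking, then verify the enriched-category axioms and functoriality in $\cat{C}$; all substantive content reduces to unit and associativity laws in $\cat{C}$.

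First I would confirm that the formula defines a functor $\cat{S}_\cat{C}(p)\op\times\cat{S}_\cat{C}(q)\to\smset$. Writing a morphism $(i,c)\to(i',c')$ in $\cat{S}_\cat{C}(p)$, per \eqref{eqn.hom_sets_selection}, as an assignment $a'\mapsto(\sigma(a'),\tau_{a'})$ with $\tau_{a'}\colon c_{\sigma(a')}\to c'_{a'}$, precomposition turns an element $\phi=\bigl(\phi_1(b),\phi_2(b)\bigr)_b$ of $\Hom_{\cat{S}_\cat{C}}(p,q)\bigl((i',c'),(j,d)\bigr)$ into $\bigl(\sigma(\phi_1(b)),\,\phi_2(b)\circ\tau_{\phi_1(b)}\bigr)_b$, and postcomposition on the $q$-side applies $\cat{C}$-composition on the right; functoriality in each variable reduces to unit and associativity in $\cat{C}$.

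Next, $\Hom_{\cat{S}_\cat{C}}(p,p)$ agrees objectwise with \eqref{eqn.hom_sets_selection}, so it equals the hom-profunctor $\yon_{\cat{S}_\cat{C}(p)}$, and the unit profunctor map is the identity. For a third polynomial $r=\sum_{k:K}\yon^{C_k}$, I would define composition
\[
  \Hom_{\cat{S}_\cat{C}}(q,r)\otimes_{\cat{S}_\cat{C}(q)}\Hom_{\cat{S}_\cat{C}}(p,q)\to\Hom_{\cat{S}_\cat{C}}(p,r)
\]
pointwise: given $\alpha_b=(a(b),\alpha'_b)$ and $\beta_{k'}=(b(k'),\beta'_{k'})$, produce $\bigl(a(b(k')),\,\beta'_{k'}\circ\alpha'_{b(k')}\bigr)_{k'}$. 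Coend balance over $\cat{S}_\cat{C}(q)$, and associativity and unitality of the enriched composition, all reduce to the corresponding laws in $\cat{C}$. For functoriality in $\cat{C}$, a functor $F\colon\cat{C}\to\cat{C}'$ induces for each polynomial $p$ an ordinary functor $\cat{S}_\cat{C}(p)\to\cat{S}_{\cat{C}'}(p)$ by $(i,c)\mapsto(i,F\circ c)$ and by applying $F$ to $\cat{C}$-morphisms, and profunctor maps $\Hom_{\cat{S}_\cat{C}}(p,q)\to\Hom_{\cat{S}_{\cat{C}'}}(p,q)$ by applying $F$ inside each $\cat{C}(c_a,d_b)$; these cohere into an enriched functor, and $\cat{S}_{(-)}$ respects identity and composition of functors strictly.

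The only real work is the coend-balance check for composition; this is a short diagram chase witnessed by associativity in $\cat{C}$. As a more conceptual route—which also makes functoriality in $\cat{C}$ transparent via the naturality of Kan extensions—one can recognize $\Hom_{\cat{S}_\cat{C}}(p,q)$ as the underlying hom-data of the polynomial bicomodule $\lens{p\circ\catcmd{C}}{q}$ from \cref{rem.bicomodules}, deriving units and composition from its left and right coactions together with \cref{thm.colax}.
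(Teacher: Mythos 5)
Your proposal is correct and follows essentially the same route as the paper: the paper likewise verifies bifunctoriality of $\Hom_{\cat{S}_\cat{C}}(p,q)$ by the explicit reindex-and-compose formula, takes the unit profunctor map to be the identity (since $\Hom_{\cat{S}_\cat{C}}(p,p)$ coincides with \eqref{eqn.hom_sets_selection}), and defines $\cat{S}_F$ by applying $F$ objectwise and to (hetero)morphisms. You in fact supply slightly more than the paper does, by writing out the composition map and noting the coend-balance check, which the paper leaves to the reader; your closing remark identifying $\Hom_{\cat{S}_\cat{C}}(p,q)$ with the bicomodule of \cref{rem.bicomodules} matches a footnote in the paper's own proof.
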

\begin{proof}
We first check that the on-objects description of the profunctor, given in \eqref{eqn.prof_desc},%
\footnote{
By \cref{rem.pro_is_pra}, profunctors are special pra-functors, and by \cref{rem.pra_bico}, pra-functors between copresheaf categories are polynomial bicomodules between the associated polynomial comonads. Under this correspondence, the profunctors $\Hom_{\cat{S}_\cat{C}}(p,q)$ defined here \eqref{eqn.prof_desc} correspond to the bicomodules from \eqref{eqn.bico_change_poly}.
}
 is functorial in the morphisms from \eqref{eqn.hom_sets_selection}. The map
\begin{multline*}
  \bigg(\prod_{a:A_i}\sum_{a':A'_{i'}}\cat{C}\big(c'(a'),c(a)\bigg)\times
  \bigg(\prod_{b:B_{j}}\sum_{a:A_i}\cat{C}\big(c(a),d(b)\big)\bigg)\times
  \bigg(\prod_{b':B_{j'}}\sum_{b:B_j}\cat{C}\big(d(b),d'(b')\big)\bigg)\\\too
  \bigg(\prod_{b':B'_{j'}}\sum_{a':A'_{i'}}\cat{C}\big(c(a'),d(b')\big)\bigg)
\end{multline*}
is given by $((a',f),(a,g),(b,h))\mapsto b'\mapsto ((a'\circ a\circ b)(b'), f\then g\then h).$%
\footnote{
The typing is given by $a'\colon A_i\to A'_{i'}$ and $f:\prod_{a:A_i}\cat{C}(c'(a'(a)),c(a))$, etc.
}
The unit map $\id_{\cat{S}_\cat{C}(p)}\imp\Hom_{\cat{S}_\cat{C}}(p,p)$  is itself the identity. We leave the composition map, as well as unitality and associativity, to the reader.

Suppose given a functor $F\colon\cat{C}\to\cat{C'}$. We will provide an identity-on-objects $\pprof$-enriched functor $\cat{S}_F\colon\cat{S}_\cat{C}\to\cat{S}_\cat{C'}$. For each object $p:\poly$, the functor $\cat{S}_F(p)\colon\cat{S}_\cat{C}(p)\to\cat{S}_\cat{C'}(p)$ sends object $(i,c)$ to $(i,F(c))$, where $F(c)(a)\coloneqq F(c(a))$ for any $a:A_i$, and similarly sends morphism $(a,f)\colon (i,c)\to(i',c')$ to $(a,F(f))$; see \cref{eqn.ob_select,eqn.hom_sets_selection} for notation. 

Finally, for each $p,q$, we need a profunctor morphism of the form
\[
\begin{tikzcd}[column sep=60pt]
	\cat{S}_\cat{C}(p)\ar[r, tick, "{\Hom_{\cat{S}_\cat{C}}(p,q)}", ""' name=top]\ar[d, "\cat{S}_F(p)"']&
		\cat{S}_\cat{C}(q)\ar[d, "\cat{S}_F(q)"]\\
		\cat{S}_\cat{C'}(p)\ar[r, tick, "{\Hom_{\cat{S}_\cat{C'}}(p,q)}"', "" name=bot]&\cat{S}_\cat{C'}(q)
		\ar[from=top, to=bot, Rightarrow, shorten=3pt, "{\cat{S}_F(p,q)}"]
\end{tikzcd}
\]
It is again entirely straightforward. Indeed, given objects $(i,c):\ob\cat{S}_\cat{C}(p)$ and $(j,d):\ob\cat{S}_\cat{C}(q)$, the map sends a heteromorphism $(a,f):\Hom_{\cat{S}_\cat{C}}(p,q)$ to the heteromorphism $(a, F(f)):\Hom_{\cat{S}_\cat{C'}}(p,q)$. We leave the remaining equations to the reader.
\end{proof}

\begin{corollary}\label{cor.sel_functorial}
The map $\lens{p\circ-}{p}\colon\smcat\to\smcat$ from \cref{prop.selection} is functorial.
\end{corollary}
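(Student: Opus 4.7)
The plan is to deduce this as an immediate consequence of \cref{thm.sel_enriched}. That theorem produces a functor $\cat{S}_-\colon\smcat\to\pprofcat$, and for any fixed polynomial $p:\poly$ there is an evident operation that extracts, from each $\pprof$-enriched category $\cat{S}_\cat{C}$, the ordinary category $\cat{S}_\cat{C}(p)$ assigned to $p$. Moreover, inspecting the construction in the proof of \cref{thm.sel_enriched}, the $\pprof$-enriched functor $\cat{S}_F$ associated to a functor $F\colon\cat{C}\to\cat{C'}$ was built to be identity-on-objects on $\ob\poly$, so its component at $p$ is an honest ordinary functor $\cat{S}_F(p)\colon\cat{S}_\cat{C}(p)\to\cat{S}_\cat{C'}(p)$. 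Since functoriality of $\cat{S}_-$ respects this identity-on-$\ob\poly$ property (identities and composites are formed pointwise at each object), the assignment $F\mapsto\cat{S}_F(p)$ is itself functorial, yielding a functor $\smcat\to\smcat$.

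Next I would invoke \cref{lemma.selection} to identify $\cat{S}_\cat{C}(p)\cong\lens{p\circ\catcmd{C}}{p}$ and check that this isomorphism is natural in $\cat{C}$. Naturality reduces to comparing the explicit formulas for $\cat{S}_F(p)$ given in the proof of \cref{thm.sel_enriched}, namely $(i,c)\mapsto(i,F\circ c)$ on objects and $(a,f)\mapsto(a,F(f))$ on morphisms, with the map induced on the Kan extension $\lens{p\circ\catcmd{C}}{p}$ by whiskering with the natural transformation $\catcmd{C}\to\catcmd{C'}$ determined by $F$. Using the formulas \eqref{eqn.kan_pra_to_set} and \eqref{eqn.cat_comonad_carrier} for the Kan extension and the carrier $\catcmd{C}$, both descriptions reduce to applying $F$ elementwise to the tuples $c\colon A_i\to\ob\cat{C}$ and to the hom-set components, so agreement is immediate. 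Functoriality of $\lens{p\circ-}{p}$ then follows from that of $\cat{S}_-$.

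I expect no real obstacle: the corollary is essentially bookkeeping on top of \cref{thm.sel_enriched}, and the only point worth spelling out in a written proof is the naturality of the isomorphism from \cref{lemma.selection}, which translates the hands-on enriched description into the abstract Kan-extension description. Unit and composition axioms for $\lens{p\circ-}{p}$ are inherited directly from the corresponding axioms already verified for $\cat{S}_-$ in \cref{thm.sel_enriched}.
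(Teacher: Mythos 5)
Your proposal is correct and follows essentially the same route as the paper: the paper's proof simply extracts the functor $\cat{S}_F(p)$ constructed in the proof of \cref{thm.sel_enriched} and notes that identities and composites are respected. Your additional remarks on the naturality of the identification from \cref{lemma.selection} are fine but amount to the same bookkeeping the paper leaves implicit.
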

\begin{proof}
Given $F\colon\cat{C}\to\cat{C'}$, the proof of \cref{thm.sel_enriched} defined a functor $\cat{S}_F(p)\colon\lens{p\circ\cat{C}}{p}\to\lens{p\circ\cat{C'}}{p}$. The respecting of identities and compositions are straightforward.
\end{proof}

Let $\pprof_{\bo}$ (resp.\ $\pprof_{\ff}$) denote the wide, locally full subcategory of $\pprof$ whose tight morphisms are bijective-on-objects (resp.\ fully faithful) functors. Accordingly, let $\pprofcat[\bo]$ (resp.\ $\pprofcat[\ff]$) denote the category of $\pprof_\bo$-enriched (resp.\ $\pprof_\ff$-enriched) categories and functors.

\begin{proposition}\label{prop.selection_boff}
The functor $\cat{S}_-$ from \cref{thm.sel_enriched} restricts to functors
\[
	\cat{S}_-\colon\smcat_{\bo}\too\pprofcat[\bo]
	\qqand
	\cat{S}_-\colon\smcat_{\ff}\too\pprofcat[\ff],
\]
meaning that if $F\colon\cat{C}\to\cat{C'}$ is bijective on objects (resp.\ fully faithful) then so is the functor $\cat{S}_F(p)$ for each $p:\poly$.
\end{proposition}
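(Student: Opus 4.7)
The plan is to recall the explicit action of $\cat{S}_F(p)$ from the proof of \cref{thm.sel_enriched} and to verify each claim by unpacking the $\sum$/$\prod$-formulas for objects and hom-sets given by \eqref{eqn.ob_select,eqn.hom_sets_selection}. The point is that both being bijective on objects and being fully faithful are properties of a functor that are closed under taking $\Sigma$'s and $\Pi$'s of the relevant sets, so they propagate from $F$ to $\cat{S}_F(p)$ levelwise.

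For the bijective-on-objects case, recall that for $p=\sum_{i:I}\yon^{A_i}$ the map $\cat{S}_F(p)$ acts on objects by $(i,c)\mapsto(i,F\circ c)$. If $F\colon\ob\cat{C}\to\ob\cat{C'}$ is a bijection, then post-composition with $F$ gives a bijection $\prod_{a:A_i}\ob\cat{C}\iso\prod_{a:A_i}\ob\cat{C'}$ for every $i:I$, and taking the coproduct over $I$ yields a bijection on $\ob\cat{S}_\cat{C}(p)\iso\ob\cat{S}_{\cat{C'}}(p)$, as required.

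For the fully faithful case, recall that $\cat{S}_F(p)$ acts on a morphism $(a,f)\colon(i,c)\to(i',c')$, with $a\colon A_{i'}\to A_i$ and $f\in\prod_{a':A_{i'}}\cat{C}(c(a(a')),c'(a'))$, by $(a,f)\mapsto(a,F\circ f)$, where $(F\circ f)_{a'}\coloneqq F(f_{a'})$. Fix objects $(i,c),(i',c'):\ob\cat{S}_\cat{C}(p)$. For each fixed $a,a'$, full-faithfulness of $F$ gives a bijection $\cat{C}(c(a),c'(a'))\iso\cat{C'}(F(c(a)),F(c'(a')))$; taking $\sum_{a:A_i}$ (summing over the same index set $A_i$ in both source and target, using that $\cat{S}_F(p)$ does not alter the $a$-component) produces a bijection
\[\sum_{a:A_i}\cat{C}(c(a),c'(a'))\iso\sum_{a:A_i}\cat{C'}(F(c(a)),F(c'(a'))),\]
and then taking $\prod_{a':A_{i'}}$ gives a bijection between $\Hom_{\cat{S}_\cat{C}(p)}((i,c),(i',c'))$ and $\Hom_{\cat{S}_{\cat{C'}}(p)}((i,F\circ c),(i',F\circ c'))$, which is exactly the statement of full faithfulness of $\cat{S}_F(p)$.

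There is no real obstacle: the only thing to be careful about is matching indices in the hom-formula \eqref{eqn.hom_sets_selection}—in particular noting that the selection functor does not permute the $a$-index under the $\Sigma$, so the bijection can indeed be taken fiberwise over $a$ before summing. Once that is observed, the argument in each case is just "a $\Sigma$ or $\Pi$ of bijections is a bijection."
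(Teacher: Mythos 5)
Your proposal is correct and follows essentially the same route as the paper, which simply asserts that inspecting the explicit object- and hom-formulas from \cref{thm.sel_enriched} shows $\cat{S}_F(p)$ inherits the \bo{} (resp.\ \ff) property from $F$; you have carried out exactly that inspection, correctly observing that the object map is a $\Sigma$-$\Pi$ of copies of $F$ on objects and the hom map is a $\Pi$-$\Sigma$ of copies of $F$ on hom-sets, fiberwise over an unchanged index. The only difference is that you supply the details the paper leaves to the reader.
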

\begin{proof}
Investigating the proof of \cref{thm.sel_enriched}, one sees that $\cat{S}_F(p)$ is $\bo$ (resp.\ $\ff$) if $F$ is.
\end{proof}

While it is beyond the scope of this paper, we conjecture that the considerations in this section arise because $\lens{p\circ-}{p}$ sends left adjoint bicomodules to left adjoint bicomodules, and preserves the ternary factorization system $(\Delta_\ff\then\Delta_\bo\then\Sigma_\dopf)$ of left adjoint bicomodules.

\begingroup
\linespread{1}\selectfont
\printbibliography
\endgroup

\end{document}